\DeclareMathOperator{\inv}{in}
\DeclareMathOperator{\out}{out}
\DeclareMathOperator{\neigh}{N}
\DeclareMathOperator{\syz}{Syz}
\newcommand{\arcs}{\mathcal A}
\newcommand{\cycles}{\mathcal C}
\newcommand{\edges}{\mathcal E}
\newtheorem{example}{Example}
\newtheorem{proposition}{Proposition}
\newtheorem{theorem}{Theorem}
\newtheorem{remark}{Remark}
\newtheorem{definition}{Definition}
\begin{document}

\begin{frontmatter}

\title{The algebra of reversible Markov chains}
\runtitle{Algebra of  reversible MC}


\author{\fnms{Giovanni} \snm{Pistone}\corref{}\ead[label=e1]{giovanni.pistone@carloalberto.org}\thanksref{t1}}
\thankstext{t1}{G. Pistone was supported by Collegio Carlo Alberto, Moncalieri}
\address{Via Real Collegio 30\\ 10024 Moncalieri, Italy \\ \printead{e1}}
\affiliation{Collegio Carlo Alberto}
\and
\author{\fnms{Maria Piera} \snm{Rogantin}\ead[label=e2]{rogantin@dima.unige.it}}
\address{Dipartimento di Matematica, Via Dodecaneso 30 \\ Genova, Italy \\ \printead{e2}}
\affiliation{Universit\`a di Genova}

\runauthor{G. Pistone and M.P. Rogantin}

\begin{abstract}
For a Markov chain both the detailed balance condition and the cycle Kolmogorov condition are algebraic binomials. This remark suggests to study reversible Markov chains with the tool of Algebraic Statistics, such as toric statistical models. One of the results of this study in an algebraic parameterization of reversible Markov transitions and their invariant probability. \end{abstract}

\begin{keyword}[class=AMS]
\kwd[Primary ]{60J10}
\kwd[; secondary ]{13P10}
\end{keyword}

\begin{keyword}
\kwd{Reversible Markov Chain}
\kwd{Algebraic Statistics}
\kwd{Toric Ideal}
\end{keyword}

\end{frontmatter}

\section{Introduction}
On a finite state space $V$ we consider q-reversible (quasi-reversible) Markov matrices, i.e. Markov matrices $P$ with elements denoted $P_{v\to w}$, $v, w \in V$, such that $P_{v\to w}=0$ if, and only if, $P_{w\to v}=0$, $v\ne w$.

\begin{figure}[t]
  \begin{center}
    \includegraphics[scale=.5]{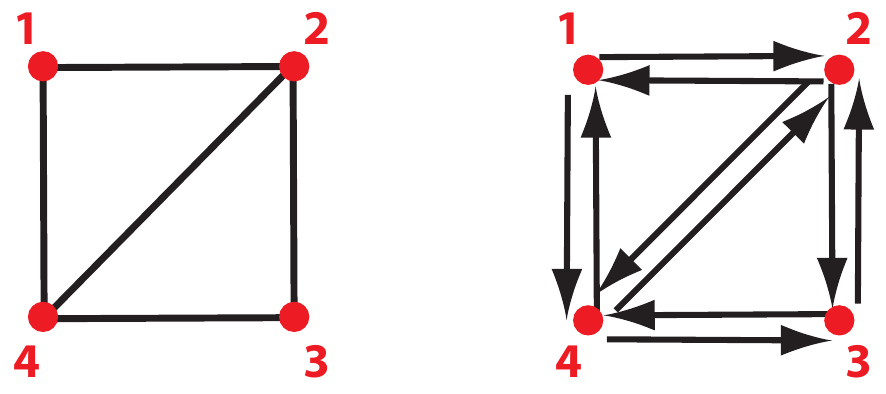}
  \end{center}
  \caption{Running example: undirected graph $\mathcal G$ and directed graph $\mathcal D$\label{fig:2arcs}}
\end{figure}
The support of $P$ is the graph $\mathcal G = (V,\edges)$, where $\overline{vw}$, $v\ne w$, is an edge if, and only if, $P_{v\to w}$ and $P_{w \to v}$ are both positive. We associate to each edge $\overline{vw}$ the two directed arcs $v\to w$ and $w\to v$ to get a directed graph without loops (i.e. arcs from $v$ to $v$) that we denote by $\mathcal D = (V,\arcs)$, see Figure \ref{fig:2arcs}. The neighborhood of $v$ is $\neigh(v)$, the degree of $v$ is $d(v)$, the set of arcs leaving $v$ is $\out(v)$, the set of arcs entering $v$ is $\inv(v)$.

Viceversa, given a connected graph $\mathcal G = (V,\edges)$, a q-reversible Markov matrix $P$ with structure $\mathcal G$ is such that $P_{v\to w} = 0$ if $\overline{vw} \notin \edges$ and $P_{v\to w}=0$ if, and only if, $P_{w\to v}=0$, $\overline{vw}\in\edges$. Such a Markov matrix is characterized as a mapping $P\colon \arcs \to \reals_+$ such that $\sum_{w\in\neigh(v)} P_{v\to w} \le 1$, $v \in V$. As we assume $P_{v\to w} = 0$, $\overline{vw}\notin\edges$, we have $P_{v\to v} = 1 - \sum_{w\in\neigh(v)} P_{v\to w}$. The support of $P$ will be a sub-graph of $\mathcal G$. We are going to need this detailed classification in the following.  For the time being, we note that the set of $P$'s is parameterized by a product of symplexes $\bigtimes_{v\in V} S(\neigh(v))$, where $S(I)=\setof{x\in\reals^I}{\sum_{i\in I} x_i \le 1}$ denotes the (solid) simplex over the index set $I$.

A Markov matrix $P$ on $V$ satisfies the \emph{detailed balance}  conditions if there exists $\kappa(v) > 0$, $v \in V$, such that
\begin{equation*}
  \kappa(v)P_{v\to w} = \kappa(w)P_{w\to v}, \quad v, w \in V.
\end{equation*}
It follows that $P$ is q-reversible and that $\pi(v)=\kappa(v)/\sum_{v\in V} \kappa(v)$ is an invariant probability with full support. Equivalently, the Markov chain $(X_n)$, $n=0,1,\dots$, with invariant probability $\pi$ and transition matrix $[P_{v\to w}]$, $v, w \in V$, has \emph{reversible} bivariate joint distribution
\begin{equation}\label{eq:2-reversible}
\probof{X_n = v, X_{n+1} = w} = \probof{X_n = w,X_{n+1}=v}, \quad v, w \in V, \quad n \ge 0.
\end{equation}
Such a Markov chain, and its transition matrix are called reversible. Reversible Markov Chains (MCs) are relevant in Statistical Physics, e.g. in the theory of entropy production, and in Applied Probability, e.g. the simulation method Monte Carlo Markov Chain (MCMC). The main aim of this paper is to find useful parameterizations of the reversible Markov matrices of a given structural graph.

In Section \ref{sec:BGD} we review some basics from \cite{dobrushin|sukhov|fritts:1988}, \cite{kelly:1979}, \cite[Ch 5]{strook:2005IMPs}, \cite{diaconis|rolles:2006}, \cite{hastings:1970mcmc}, \cite{peskun:1973}, \cite{liu:2008strategies}.

In Section \ref{sec:algebra} we discuss the algebraic theory prompted by the detailed balance condition. The results
pertain to the area of Algebraic Statistics, see e.g. \cite{pistone|riccomagno|wynn:2001},
\cite{drton|sturmfels|sullivan:2009}, \cite{gibilisco|riccomagno|rogantin|wynn:2010}. Previous results in the same
algebraic spirit were presented in \cite{suomela:1979} and \cite{mitrophanov:2004}. We believe the results here are
new; some proofs depend on classical notions of graph theory that are reviwed in some detail because of our particular
context.

The discussion and the conclusions are briefly presented in Section \ref{sec:discussion}.

\section{Background}\label{sec:BGD}
\subsection{Reversible Markov process}
The reversibility of the bivariate joint distribution in Equation \eqref{eq:2-reversible} gives a second parameterization of reversible Markov chains. In fact, the stochastic process $\left(X_n\right)_{n \ge 0}$ with state space $V$ is 2-reversible and 2-stationary if, and only if, Equation \eqref{eq:2-reversible} holds. When the process is a Markov chain, the distribution depends on the bivariate distributions only. In particular, the process is 1-stationary: by summing over $w \in V$, we have
\begin{equation*}
  \probof{X_n = v} = \probof{X_{n+1} = v} = \pi(v), \quad v \in V, n \ge 0.
\end{equation*}

Let $V_2$ be the set of all subset of $V$ of cardinality 2. The elements of $V_2$ are the edges of the full graph on $V$. The following parameterization of the 2-dimensional distributions has been used in \cite{diaconis|rolles:2006}:
\begin{align*}
\theta_{v} &= \probof{X_n = v, X_{n+1} = v}, \quad v \in V, \\
\theta_{\overline{vw}} &= \probof{X_n = v,X_{n+1}=w} + \probof{X_n=w,X_{n+1} = v} \\ &= 2\probof{X_n = v,X_{n+1}=w}, \quad \overline{vw} \in V_2.
\end{align*}
The number of parameters is $N+\binom N 2 = \binom {N+1} 2$; moreover it holds
\begin{equation*}
  1=  \sum_{v,w \in V} \probof{X_n = v,X_{n+1} = w}=  \sum_{v \in V} \theta_{v} + \sum_{\overline{vw} \in V_2} \theta_{\overline{vw}},
\end{equation*}
hence $\theta = (\theta_V, \theta_{V_2})$ belongs to $\Delta(V \cup V_2)$, where
$\Delta(I)=\setof{x\in\reals^I}{\sum_{i\in I} x_i = 1}$ denotes the (flat) simplex on the index set $I$.

Given an undirected graph $\mathcal G = (V,\edges)$ such that $\probof{X_n = v, X_{n+1} = w} = 0$ if $\overline{vw} \notin \edges$, then the vector of parameters $\theta=(\theta_V,\theta_\edges)$ belongs to the convex set $\Delta(V \cup \edges)$. We note that the vertices $V$ are identified with loops of the transitions because $\theta_v=\probof{X_n=v,X_{n+1}=v}$.

The marginal probability $\pi$ can be written using the  $\theta$ parameters:
\begin{equation*}
  \pi(v)= \sum_{w \in V} P\left(X_n = v,X_{n+1} = w\right)= \theta_{v} + \frac 1 2 \sum_{w \in\neigh(v)}\theta_{\overline{vw}},
\end{equation*}
or,  in matrix form,
\begin{equation*}
  \pi= \theta_{V} + \frac 12 \Gamma \theta_{\edges},
\end{equation*}
where $\Gamma$ is the incidence matrix of the graph $\mathcal G$.
\begin{example}[Running example]\label{ex:running}
Consider the graph $\mathcal G = (V,\edges)$ with $V=\set{1,2,3,4}$ and $\edges=\set{\overline{12}, \overline{23},
\overline{34},\overline{14}, \overline{24}}$, see left side of Figure \ref{fig:2arcs}.
Here
\begin{equation*}
  \Gamma=
\bordermatrix[{[}{]}]{
 & \overline{12} & \overline{23} & \overline{34} &\overline{14} & \overline{24}  \cr
1& 1 &  0 &  0 &  1 &  0\cr 2& 1 &  1 &  0 &  0 &  1\cr 3& 0 &  1 &  1 &  0 &  0\cr 4& 0 &  0 &  1 &  1 &  1\cr
 }.
\end{equation*}
\end{example}

All $\pi$'s are obtained this way if we admit positive probability on loops:
\begin{proposition}
\begin{enumerate}
\item The map
\begin{equation*}
  \gamma\colon \Delta(V \cup \edges)  \ni \theta=
  \begin{bmatrix}
    \theta_{V} \\ \theta_{\edges}
  \end{bmatrix}
  \longmapsto \pi =
  \begin{bmatrix}
    I_{V} & \frac 12 \Gamma
  \end{bmatrix}
  \begin{bmatrix}
    \theta_{V} \\ \theta_{\edges}
  \end{bmatrix}
  \in \Delta(V)
\end{equation*}
is a surjective Markov map.
\item The image of $(0,\theta_{\edges})$, $\theta_{\edges} \in \Delta(\edges)$, is the convex hull of the half points of each edge of the simplex $\Delta(V)$ whose vertices are connected in $\mathcal G$.
\end{enumerate}
\end{proposition}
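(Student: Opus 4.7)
The plan is to treat the map $\gamma$ as multiplication by the block matrix $M = [I_V \; \tfrac12\Gamma]$ and to check the two parts essentially by inspection of the columns of $M$.

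For part (1), I would first verify that $\gamma$ really does land in $\Delta(V)$, i.e.\ that it is a Markov (stochastic) map. This amounts to showing that every column of $M$ sums to $1$: the columns indexed by $V$ are the standard basis vectors $e_v$, which trivially sum to $1$, while the column of $\tfrac12\Gamma$ indexed by the edge $\overline{vw}$ is $\tfrac12(e_v+e_w)$, again of total mass $1$ since the incidence matrix $\Gamma$ has exactly two ones in each column. Consequently, for any $\theta \in \Delta(V\cup\edges)$, the total mass of $\gamma(\theta)$ equals $\sum_v \theta_v + \sum_{\overline{vw}} \theta_{\overline{vw}} = 1$. Surjectivity is immediate: given any $\pi \in \Delta(V)$, the choice $\theta_V = \pi$ and $\theta_\edges = 0$ lies in $\Delta(V\cup\edges)$ and satisfies $\gamma(\theta) = \pi$.

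For part (2), I would use the basic fact that the image of a simplex under an affine (here linear) map is the convex hull of the images of its vertices. The vertices of $\Delta(\edges)$ are the indicator vectors of single edges $\overline{vw} \in \edges$, and each such vertex maps under $\theta_\edges \mapsto \tfrac12 \Gamma \theta_\edges$ to the corresponding column $\tfrac12(e_v + e_w)$, i.e.\ exactly the midpoint of the segment of $\Delta(V)$ joining the vertices $v$ and $w$. Therefore the image of $\{(0,\theta_\edges) : \theta_\edges \in \Delta(\edges)\}$ is the convex hull of the midpoints of those edges of $\Delta(V)$ that correspond to edges of $\mathcal G$, which is the statement to be proved.

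Neither step presents a real obstacle; the only thing to be slightly careful about is the convention that $\Delta$ denotes the flat simplex, so that $\gamma$ preserves \emph{total} mass rather than merely being nonnegative. Once one reads off the two possible kinds of columns of $M$, both assertions follow directly.
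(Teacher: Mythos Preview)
Your proof is correct and follows the same approach as the paper: for (1) you use the preimage $(\pi,0_\edges)$ to get surjectivity, and for (2) you identify the columns $\tfrac12\Gamma_{\overline{vw}}=\tfrac12(e_v+e_w)$ as edge midpoints and take their convex hull. If anything, you are slightly more careful than the paper, which does not explicitly verify that the columns of $M$ sum to $1$ (i.e.\ that $\gamma$ really is a Markov map landing in $\Delta(V)$); your check of this is a worthwhile addition.
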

\begin{proof}
\begin{enumerate}
\item Each probability $\pi$ is the image of $\theta=(\pi,0_\edges)$.
\item If $\theta_V=0$, then $\pi = \sum_{w\in\neigh(v)} \frac12 \Gamma_{\overline{vw}} \theta_{\overline{vw}}$, where $\sum_{w\in\neigh(v)} \theta_{\overline{vw}}=1$ and $\Gamma_{\overline{vw}}$ is the $\overline{vw}$-column of $\Gamma$. Hence, $\frac12\Gamma_{\overline{vw}}$ is the the middle point of the $v$- and the $w$-vertex of the simplex $\Delta(V)$.
\end{enumerate}
\end{proof}
Item 1 of the proposition leaves open the question of the existence of an element $\theta$ such that $\theta_\edges>0$
for each $\pi > 0$. This is discussed in the next subsection. Item 2 shows that, while all $\pi$'s can be obtained if
loops are allowed ($\theta_V \ge 0$), only a convex subset of $\Delta(V)$ is obtained if we do not allow for loops
($\theta_V=0$).
\subsection{From a positive $\pi$  to positive transitions}
Given $\pi$, the fiber $\gamma^{-1}(\pi)$ is contained in an affine space parallel to the subspace
\begin{equation*}
  \ker(I_V+\frac12\Gamma) = \setof{\theta \in \reals^{V\cup\edges}}{\theta_V + \frac12 \Gamma \theta_{\edges} = 0}.
\end{equation*}
Each fiber $\gamma^{-1}(\pi)$, $\pi > 0$, contains special solutions. The solution $(\pi,0_{\edges})$ is not of interest because we want $\theta_\edges > 0$. If the graph has full connections, $\mathcal G = (V,V_2)$, there is the independence solution $\theta_{v} = \pi(v)^2$, $\theta_{\overline{vw}} = 2\pi(v)\pi(w)$.
e
If $\pi(v) > 0$, $v \in V$, a strictly positive solution is obtained as follows. Let $d(v)$ be the degree of the vertex $v$ and define a transition probability by $A(v,w)=1/(2d(v))$ if $\overline{vw}\in\edges$, $A(v,v)=1/2$, and $A(v,w)=0$ otherwise. $A$ is the transition matrix of a random walk on the graph $\mathcal G$, stopped with probability 1/2. Define a probability on $V\times V$ with $Q(v,w)=\pi(v)A(v,w)$. If $Q(v,w)=Q(w,v)$, $v,w\in V$, we are done: we have found a 2-reversible probability with marginal $\pi$ and such that $Q(v,w)>0$ if, and only if $\overline{vw} \in \edges$. Otherwise, if $Q(v,w) \ne Q(w,v)$ for some $v,w\in V$, we turn to the following Hastings-Metropolis construction.

\begin{proposition}\label{prop:M-H}
Let $Q$ be a probability on $V\times V$ such that $Q(v,w)>0$ if, and only if, $\overline{vw} \in \edges$ or $v=w$. Write $\pi(v) = \sum_w Q(v,w)$. Given $f: ]0,1[\times]0,1[ \to
]0,1[$ a symmetric function such that $f(x,y) \le x \wedge y$, then
    \begin{align*}
      P(v,w) =
      \begin{cases}
        f(Q(v,w),Q(w,v)) & \text{if $v \ne w$,} \\
        \pi(v) - \sum_{w \colon w \ne v} P(v,w) & \text{if $v=w$.}
      \end{cases}
    \end{align*}
is a 2-reversible probability on $V\times V$ such that $\pi(v) = \sum_w P(v,w)$ and $P(v,w)>0$ if, and only if, $\overline{vw} \in \edges$.
\end{proposition}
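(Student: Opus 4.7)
The plan is to verify directly the four ingredients packed into the proposition — symmetry (which is 2-reversibility), nonnegativity, the marginal identity, and the support statement — each of which follows from the definitions together with the inequality $f(x,y)\le x\wedge y$.

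Before the main argument I would address a small technical point. The function $f$ is declared on $]0,1[\times]0,1[$, but the definition of $P(v,w)$ for $v\ne w$ calls for $f(Q(v,w),Q(w,v))$ for every such pair, including those with $\overline{vw}\notin\edges$ where both arguments vanish. The inequality $f(x,y)\le x\wedge y$ forces any continuous extension to take the value $0$ on the boundary of the unit square, so I would agree once and for all that $P(v,w)=0$ whenever $Q(v,w)=0$. Note that the hypothesis on $Q$ makes $Q(v,w)=0$ equivalent to $Q(w,v)=0$ for $v\ne w$ (both amount to $\overline{vw}\notin\edges$), so no inconsistency arises from this convention.

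The routine facts are then immediate. Symmetry $P(v,w)=P(w,v)$ for $v\ne w$ is the symmetry of $f$; together with the trivial equality on the diagonal this is 2-reversibility. The marginal identity $\sum_w P(v,w)=\pi(v)$ is the defining equation for $P(v,v)$, and summing over $v$ yields $\sum_{v,w}P(v,w)=\sum_v\pi(v)=\sum_{v,w}Q(v,w)=1$, so $P$ is a probability on $V\times V$.

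The one step that genuinely exploits the hypothesis on $f$ is nonnegativity of the diagonal: applying $f(x,y)\le x$ termwise and summing over $w\ne v$ gives $\sum_{w\ne v}P(v,w)\le\sum_{w\ne v}Q(v,w)=\pi(v)-Q(v,v)$, hence $P(v,v)\ge Q(v,v)\ge 0$. The support statement is equally short: for $v\ne w$, $P(v,w)=f(Q(v,w),Q(w,v))$ is positive iff both $Q$-values are positive (since $f>0$ on the open square and $=0$ on the axes after extension), which by hypothesis on $Q$ is equivalent to $\overline{vw}\in\edges$. I do not anticipate any real obstacle; the only subtle point is the boundary extension of $f$, and once that is fixed the rest is bookkeeping from the constraint $f(x,y)\le x\wedge y$, which is precisely tailored to keep the self-loop masses $P(v,v)$ nonnegative.
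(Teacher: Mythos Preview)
Your proposal is correct and follows essentially the same line as the paper: the paper's proof also reduces to the single inequality $P(v,w)\le Q(v,w)$ for $v\ne w$ (from $f(x,y)\le x$) to obtain $P(v,v)\ge Q(v,v)$, then reads off symmetry, the marginal identity, and total mass~$1$ by construction. Two minor remarks: your explicit discussion of the boundary extension of $f$ is a useful clarification the paper leaves implicit, and note that by hypothesis $Q(v,v)>0$ strictly, so in fact $P(v,v)>0$ (not merely $\ge 0$), which the paper records.
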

\begin{proof}
For $\overline{vw} \in \edges$ we have $P(v,w)=P(w,v) > 0$, otherwise zero. As $P(v,w) \le Q(v,w)$, $v \ne w$, it follows
\begin{align*}
  P(v,v) &= \pi(v) - \sum_{w \colon w \ne v} P(v,w) \\
         &\ge \sum_w Q(v,w) - \sum_{w \colon w \ne v} Q(v,w) \\
         &= Q(v,v) > 0.
\end{align*}
We have $\sum_w P(v,w) = \pi(v)$ by construction and, in particular, $P(v,w)$ is a probability on $V\times V$.
\end{proof}

\begin{remark}
\begin{enumerate}
\item The proposition applies to
\begin{enumerate}
\item $f(x,y) = x \wedge y$. This is the standard Hastings choice.
\item $f(x,y) = xy/(x+y)$. This was suggested by Barker.
\item $f(x,y) = xy$. In fact, as $y < 1$, we have $xy < x$.
\end{enumerate}
\item Given a joint probability $P$, the corresponding parameters
\begin{equation*}
\theta_{\overline{vw}} = 2P(v,w) \quad \textrm{and} \quad \theta_{v}= P(v,v)
\end{equation*}
are strictly positive for $\overline{vw}\in \edges$ and $v\in V$, otherwise zero. We have shown the existence of a mapping from $\pi$ in the interior of $\Delta(V)$ to a vector of parameters $\theta$ in the interior of $\Delta(V \cup \edges)$.
\end{enumerate}
\end{remark}
\subsection{Parameterization of reversible Markov matrices}\label{sec:RMC}
An q-reversible Markov matrix  $P$ supported on a graph $\mathcal G$ is parameterised by its non-zero extradiagonal values $P_{v\to w}$, i.e. by the elements of $\bigtimes_{v\in V} S^\circ(\neigh(v))$, where $S^\circ$ denotes the open solid simplex. As $\mathcal G$ is connected, the invariant probability of the Markov matrix $P$ is unique, therefore the joint 2-distribution is uniquely defined. If moreover the Markov matrix is reversible, the joint 2-distribution is symmetric and the $\theta$ parameters are computed. Viceversa, given the $\theta$'s, the transition matrix is given by
\begin{equation}\label{eq:theta}
  P_{v\to w} =\frac {\probof{X_n = v,X_{n+1} = w}}{\probof{X_n = v}}= \frac {\theta_{\overline{vw}}}{2\theta_v + \sum_{z \in D(v)} \theta_{\overline{vz}}}.
\end{equation}
The mapping $\theta \mapsto (P_{v\to w}\colon (v\to w) \in \arcs)$ is a rational mapping and the number of degrees of freedom is $\# V + \# \edges -1$.

Denoting $2\theta_v + \sum_{z \in D(v)} \theta_{\overline{vz}}$ by $\kappa(v)$, from \eqref{eq:theta}, the detailed balance conditions follow
\begin{equation*}
  \kappa(v) P_{v\to w} = \kappa(w) P_{w\to v}
\end{equation*}
and $\sum_v \kappa(v)=1$.

\subsection{A reversible Markov matrix is an auto-adjoint operator}
\label{sec:adjoint}
The detailed balance condition $\pi(v) P_{v\to w} = \pi(w) P_{w\to v}$ is equivalent to $P$ being adjoint as a linear operator on $L^2(\pi)$
\begin{equation*}
  \scalarat \pi {Pf} g = \scalarat \pi f {Pg}, \quad f,g \in L^2(\pi),
\end{equation*}
where $\scalarat \pi {Pf} g = \sum_{v}\left(\sum_{w} P_{v\to w} f(w)\right) g(v) \pi(v)$.

As $L^2(\pi)$ is isomorphic to the canonical Euclidian space $\reals^V$ via the linear mapping
\begin{equation*}
  I \colon L^2(\pi) \ni f \mapsto \diag(\pi)^{1/2} f \in \reals^V,
\end{equation*}
the Markov matrix $P$ is mapped to the symmetric matrix
\begin{equation*}
  S = I \circ P \circ I^{-1} = \diag(\pi)^{1/2}P\diag(\pi)^{-1/2}.
\end{equation*}

This implies that a reversible Markov matrix is diagonalizable; in particular the left eigenvector $\pi$ and the right eigenvector 1 of $P$ are both mapped to the eigenvector $(\pi(v)^{1/2}\colon v\in V)$ of $S$.

Each element of the symmetric matrix $S$ is positive if, and only if, the corresponding element of $P$ is positive. Each reversible Markov matrix $P$ with invariant probability $\pi$ is parameterized by a unique symmetric matrix $S$.

Viceversa, let $s(v,w)=s(w,v) > 0$ be defined for $\overline{vw}\in\edges$. Extend $s$ to all couples $v \ne w$, $\overline{vw} \notin \edges$ by $s(v,w)=0$. If
\begin{equation}\label{eq:s-inequality}
  \sum_{w\colon w\ne v} s(v,w) \sqrt{\pi(w)} \le \sqrt{\pi(v)},\quad v\in V,
\end{equation}
we can define
\begin{equation*}
  S(v,v) =  \frac1{\sqrt{\pi(v)}}\left(\sqrt{\pi(v)} - \sum_{w\ne v} s(v,w) \sqrt{\pi(w)}\right) \ge 0
\end{equation*}
to get a symmetric non-negative matrix $S=[s(v,w)]$, $v,w\in V$. The matrix $P = \diag{\pi}^{-1/2} S \diag{\pi}^{1/2}$ has non-negative entries; is a Markov matrix because
\begin{equation*}
  \sum_{w} P_{v\to w} = \sum_{w} \pi(v)^{-1/2} s(v,w) \pi(w)^{1/2} = 1, \quad v\in V;
\end{equation*}
satisfies the detailed balance equations
\begin{equation*}
  \pi(v) P_{v \to w} = \pi(v)^{1/2} \pi(w)^{1/2} s(v,w) = \pi(w) P_{w\to v}.
\end{equation*}

We can rephrase the computations above as follows.
\begin{proposition}\label{prop:squarerootpi}
  The set of Markov matrices which have structure $\mathcal G$ and are reversible with invariant probability $\pi$ is parameterized by
  \begin{equation*}
    \begin{cases}
      P_{v \to w} = \pi(v)^{-1/2} \pi(w)^{1/2} s(v,w),& (v\to w)\in\arcs,\\
      P_{v\to v} = 1 - \sum_{w\in\neigh(v) P_{v\to w}}
    \end{cases}
  \end{equation*}
the polytope of all positive weight functions $s \colon \edges \to \reals_{>0}$ and (unnormalized) positive probability $\pi$ satisfying the inequalities \eqref{eq:s-inequality}.
\end{proposition}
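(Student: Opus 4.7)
The plan is to establish the parametrization by exhibiting explicit inverse maps in both directions. Fortunately, essentially all of the verification has been carried out in the paragraphs immediately preceding the statement, so what remains is to assemble the pieces and to match the constraints on each side.

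First I would treat the forward direction. Starting from a pair $(\pi,s)$ in which $s$ is positive on $\edges$, extended by zero off $\edges$, symmetric on $V\times V$, and obeying \eqref{eq:s-inequality}, I define $P_{v\to w}=\pi(v)^{-1/2}\pi(w)^{1/2}s(v,w)$ for $v\ne w$ and $P_{v\to v}=1-\sum_{w\ne v}P_{v\to w}$. Off-diagonal nonnegativity and the correct support are immediate; nonnegativity of $P_{v\to v}$ is exactly the content of \eqref{eq:s-inequality}; the row sums equal $1$ by construction. Detailed balance drops out of the symmetry of $s$: the quantity $\pi(v)P_{v\to w}=\pi(v)^{1/2}\pi(w)^{1/2}s(v,w)$ is manifestly $v,w$-symmetric, and summing over $w$ confirms that $\pi$ is the invariant probability. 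Thus $P$ is a reversible Markov matrix with structure $\mathcal G$ and invariant $\pi$.

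For the inverse direction, given any reversible Markov matrix $P$ with structure $\mathcal G$ and invariant probability $\pi>0$, I would solve the forward formula for $s$, setting $s(v,w)=\pi(v)^{1/2}\pi(w)^{-1/2}P_{v\to w}$ for $v\ne w$. The detailed balance relation $\pi(v)P_{v\to w}=\pi(w)P_{w\to v}$ forces $s(v,w)=s(w,v)$; the structural assumption on $P$ gives $s(v,w)>0$ iff $\overline{vw}\in\edges$; and the inequality \eqref{eq:s-inequality} is equivalent to $P_{v\to v}\ge 0$, which holds because $P$ is a Markov matrix. Substituting this $s$ back into the forward formula returns $P$, so the two constructions are mutually inverse and the parametrization is a bijection.

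The only real obstacle is bookkeeping: one must check that the three ingredients on the weight side (support on $\edges$, symmetry, inequality \eqref{eq:s-inequality}) correspond exactly to the three ingredients on the Markov-matrix side (support, detailed balance with $\pi$, nonnegative diagonal). No deeper argument is needed, since the substantive computations, including the identity $\sum_w \pi(v)^{-1/2}s(v,w)\pi(w)^{1/2}=1$ and the symmetric form of detailed balance, are already spelled out in the discussion preceding the statement.
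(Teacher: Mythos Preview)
Your proposal is correct and follows essentially the same approach as the paper: the proposition is stated there as a rephrasing of the computations done in the preceding paragraphs of Section~\ref{sec:adjoint}, and you have faithfully organized those computations into the two directions of a bijection, checking support, symmetry, the inequality~\eqref{eq:s-inequality}, and detailed balance exactly as the paper does. The only cosmetic difference is that the paper packages the forward and inverse maps via the conjugation $S=\diag(\pi)^{1/2}P\diag(\pi)^{-1/2}$, whereas you work directly with the entrywise formula; the underlying verifications are identical.
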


\subsection{Kolmogorov's theorem}
\label{sec:k-theorem}

Let $\mathcal G = (V,\edges)$ be a connected graph. For each closed path $\omega$, that is a path on the graph such that the last vertex coincides with the first one, $\omega=v_0 v_1 \dots v_{n} v_0$, we denote by $r(\omega)$ the reversed path $r(\omega)=v_0 v_{n} \dots  v_1 v_0$. The Kolmogorov's characterization of reversibility based on closed paths is well known. However, we give here a variation of the proof by \cite{suomela:1979} as it is an introduction to the algebraic arguments in the next Section. The proof has been modified to allow null transitions. If $\gamma = v_0v_1\cdots v_{n-1}v$ is any path connecting $v_0$ to $v$ we write $P^\gamma$ to denote the product of transitions along $\gamma$, i.e. $P^\gamma = \prod_{i=1,n} P_{v_{i-1}\to v_i}$.

\begin{theorem}[Kolmogorov's theorem]
The Markov irreducible matrix $P$ is reversible if, and only if, for all closed path $\omega$
\begin{equation} \label{eq:rev-circ}
P^\omega = P^{r(\omega)}
\end{equation}
\end{theorem}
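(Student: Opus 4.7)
The plan is to treat the two directions separately. For the forward implication, I would proceed by a direct telescoping computation: given reversibility with stationary $\pi > 0$ and a closed path $\omega = v_0 v_1 \cdots v_n v_0$, I multiply $P^\omega$ by $\pi(v_0)$ and repeatedly apply $\pi(v_{i-1}) P_{v_{i-1}\to v_i} = \pi(v_i) P_{v_i \to v_{i-1}}$ to shift the factor of $\pi$ one step along the path, reversing each arc in turn; after $n+1$ applications the product has become $\pi(v_0) P^{r(\omega)}$, and division by $\pi(v_0)>0$ yields the identity. If some transition along $\omega$ vanishes then q-reversibility forces both $P^\omega$ and $P^{r(\omega)}$ to vanish, and the identity is trivial.

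For the converse, I assume the cycle identity and build a positive measure $\kappa$ satisfying detailed balance, in the spirit of \cite{suomela:1979}. Fix a reference vertex $v_0$. By irreducibility the support graph $\mathcal G$ is connected, so for every $v$ there is a path $\gamma_v$ from $v_0$ to $v$ lying in $\mathcal G$, and along such a path all transitions in both directions are strictly positive. I then set
\[
\kappa(v) = \frac{P^{\gamma_v}}{P^{r(\gamma_v)}}.
\]

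The key step, and the main obstacle, is to show $\kappa(v)$ does not depend on the choice of $\gamma_v$: given a second path $\gamma_v'$ from $v_0$ to $v$, the concatenation $\omega = \gamma_v \cdot r(\gamma_v')$ is a closed path at $v_0$, and the hypothesis $P^\omega = P^{r(\omega)}$ rearranges to $P^{\gamma_v} P^{r(\gamma_v')} = P^{\gamma_v'} P^{r(\gamma_v)}$, which is exactly path-independence. This is the only place where the full strength of the cycle hypothesis is used.

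Once $\kappa$ is well-defined, detailed balance is immediate: for $\overline{vw} \in \edges$ I choose $\gamma_w = \gamma_v \cdot (v\to w)$, and then $\kappa(w)/\kappa(v) = P_{v\to w}/P_{w\to v}$, i.e.\ $\kappa(v) P_{v\to w} = \kappa(w) P_{w\to v}$; for $\overline{vw}\notin\edges$ both sides vanish by q-reversibility. Normalizing $\pi(v) = \kappa(v)/\sum_u \kappa(u)$ gives a strictly positive probability, and summing the detailed balance relation over $v$ confirms $\pi$ is invariant. The entire difficulty is concentrated in the path-independence of $\kappa$; the remaining steps are routine bookkeeping.
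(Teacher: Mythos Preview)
Your proposal is correct and follows essentially the same route as the paper's proof, which the paper itself attributes to \cite{suomela:1979}: the forward direction telescopes the detailed balance relations along the closed path (the paper phrases this as ``multiplying together all detailed balance equations and clearing the $\kappa$'s''), and the converse fixes a base vertex $v_0$, defines $\kappa(v)=P^{\gamma_v}/P^{r(\gamma_v)}$, uses the cycle hypothesis on the concatenation $\gamma_v\cdot r(\gamma_v')$ to obtain path-independence, and then extends $\gamma_v$ by a single arc to read off detailed balance. Your write-up is in fact slightly more explicit than the paper's about the null-transition case and about why $\kappa$ is strictly positive.
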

\begin{proof}
Assume that the process is reversible. By multiplying together all detailed balance equations
\begin{equation*}
  \kappa(v_i) P_{v_i\to v_{i+1}} = \kappa(v_{i+1}) P_{v_{i+1}\to v_i},\quad i=0,1,\dots n, v_{n+1}=v_0,
\end{equation*}
and clearing the $\kappa$'s we obtain \eqref{eq:rev-circ}.

Viceversa, assume that all closed path have property \eqref{eq:rev-circ}. Fix once for all a vertex $v_0$ and consider a generic path $\gamma$ from $v_0$ to $v$. First we prove that there exists a positive constant $\kappa(v)$, depending only on $v$, such that $P^{\gamma}=\kappa(v)P^{r(\gamma)}$.  In fact, for any other path
$\gamma'=v_0 v_1' \dots v_{n'}' v$ with the same endpoints $v_0$ and $v$, $\gamma r(\gamma')$ is a closed path. Denoting by $k'(v)$ the corresponding constant, Kolmogorov's condition implies $k(v)=k'(v)$.
Moreover, for any vertex $w$ connected with $v$, consider the path $\gamma w$ and the corresponding constant $k(w)$. We have:
\begin{eqnarray*}
P^{\gamma}P_{v\rightarrow w}P_{w\rightarrow v}&=&k(v)P^{r\gamma}P_{v\rightarrow w}P_{w\rightarrow
v}\\
P^{\gamma w}P_{w\rightarrow v}&=&k(v)P^{r (\gamma w)}P_{v\rightarrow w}\\
k(w)P^{r(\gamma w)}P_{w\rightarrow v}&=&k(v)P^{r (\gamma w)}P_{v\rightarrow w}
\end{eqnarray*}
i.e. the detailed balance condition on $w$ and $v$.
\end{proof}

In the next Section we will discuss the algebraic interpretation of Kolmogorov condition.

\section{Algebraic theory}\label{sec:algebra}
The present section is devoted to the algebraic structure implied by the Kolmogorov's theorem for reversible Markov chains. We refer mainly to the textbooks \cite{berge:1985} and \cite{bollobas:1998} for graph theory, and to the textbooks \cite{cox|little|oshea:1997} and \cite{kreuzer|robbiano:2000} for computational commutative algebra. The theory of toric ideals is treated in detail in \cite{sturmfels:1996} and \cite{bigatti|robbiano:2001}. General references for algebraic methods in Stochastics are e.g. \cite{drton|sturmfels|sullivan:2009}, \cite{gibilisco|riccomagno|rogantin|wynn:2010}. The relevance of Graver bases, see \cite{sturmfels:1996}, has been pointed out to us by Shmuel Onn has in view of the applications discussed in \cite{deloera|hemmecke|onn|weismantel:2008} and \cite{onn:20xx}.
\subsection{Kolmogorov's ideal}
We denote by $\mathcal G = (V,\edges)$ an undirected graph. We split each edge into two opposite arcs to get a connected directed graph (without loops) denoted by $\mathcal D = (V,\arcs)$. The arc going from vertex $v$ to vertex $w$ is denoted by $v\to w$ or $(v\to w)$. The graph $\mathcal D$ is such that $(v\to v) \notin \arcs$ and $(v\to w)\in\arcs$ if, and only if, $(w\to v)\in \arcs$. Because of our application to Markov chains, we want two arcs on each edge, as it was explained in Figure \ref{fig:2arcs}.

The \emph{reversed} arc is the image of the 1-to-1 function $r \colon \arcs\to \arcs$ defined by $r(v\to w) = (w\to v)$. A \emph{path} is a sequence of vertices $\omega=v_0v_1\cdots v_n$ such that $(v_{k-1}\to v_k) \in \arcs$, $k=1,\dots,n$. The reversed path is denoted by $r(\omega)=v_nv_{n-1}\cdots v_0$. Equivalently, a path is a sequence of inter-connected arcs $\omega=a_1\dots a_n$, $a_k=(v_{k-1}\to v_k)$, and $r(\omega)=r(a_n)\dots r(a_1)$.

A \emph{closed path} $\omega = v_0 v_1 \cdots v_{n-1}v_0$ is any path going from a vertex $v_0$ to itself; $r(\omega) = v_0 v_{n-1} \cdots v_1 v_0$ is the reversed closed path. In a closed path any vertex can be the initial and final vertex.  If we do not distinguish any initial vertex, the equivalence class of paths is called a \emph{circuit}. A closed path is \emph{elementary} if it has no proper sub-closed-path, i.e. if does not meet twice the same vertex except the initial one $v_0$. The circuit of an elementary closed path is a \emph{cycle}. We denote by $\cycles$ the set of cycles of $\mathcal D$.

Consider the commutative indeterminates $P=[P_{v\to w}]$, $(v\to w)\in \arcs$, and the polynomial ring $k[P_{v\to w}:(v\to w)\in \arcs]$, i.e. the set of all polynomials in the indeterminates $P$ and coefficients in the number field $k$.

For each path $\omega = a_1\cdots a_n$, $a_k\in\arcs$, $k=1,\dots,n$, we define the monomial term
\begin{equation*}
  \omega = a_1\cdots a_n \mapsto P^{\omega} = \prod_{k=1}^n P_{a_k}.
\end{equation*}
For each $a\in\arcs$, let $N_a(\omega)$ be the number of traversals of the arc $a$ by the path $\omega$. Hence,
\begin{equation*}
  P^\omega = \prod_{a\in\arcs} P_a^{N_a(\omega)}.
\end{equation*}
%

Note that $\omega \mapsto P^{\omega}$ is a representation of the non-commutative concatenation of arcs on the commutative product of indeterminates.  Two closed paths associated to the same circuit are mapped to the same monomial term because they have the same traversal counts. The monomial term of a cycle is square-free because no arc is traversed twice.

Figure \ref{fig:6} presents the 6 cycles in the running example of Figure \ref{fig:2arcs}.%
\begin{figure}
     \begin{center}
      \includegraphics[scale=.5]{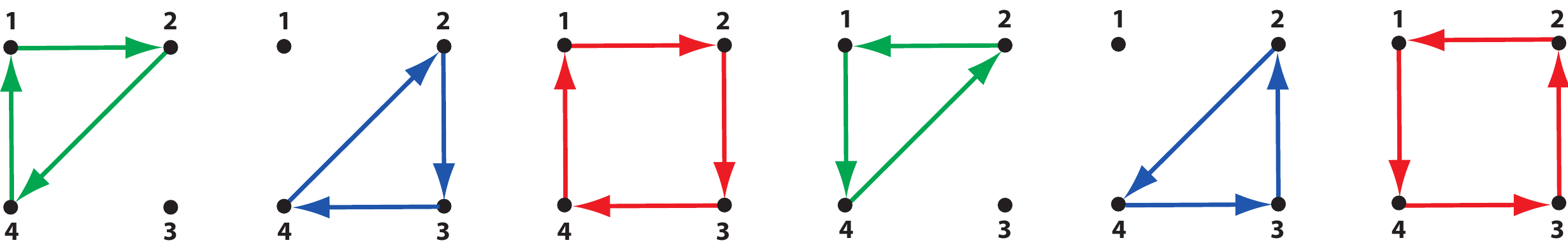}
    \end{center}
  \caption{The 6 cycles of a graph: $\omega_A=(1\to 2)(2\to 4)(4\to 1)$, $\omega_B = (2\to 3)(3\to 4)(4\to 2)$, $\omega_C = (1\to 2)(2\to 3)(3\to 2)(4\to 1)$, $\omega_D = r(\omega_A)$, $\omega_E=r(\omega_B)$,$\omega_F=r(\omega_C)$.\label{fig:6}}
\end{figure}
This list of cycles is larger than a basis of cycles in the undirected graph $\mathcal G$, for instance $\set{\omega_A,\omega_B}$. We will see below that all directed cycles are needed for the algebraic argument.

\begin{definition}[K-ideal]
The \emph{Kolmogorov's ideal} or \emph{K-ideal} of the graph $\mathcal G$ is the ideal of the ring $k[P_{v\to w}:(v\to
w)\in \arcs]$ generated by the binomials $P^\omega-P^{r(\omega)}$, where $\omega$ is \emph{any circuit}. The K-variety is
the $k$-affine variety of the K-ideal.
\end{definition}
 Our main application concerns the real case $k=\reals$, but the combinatorial structure of the K-ideal does not depend on the choice of a specific field.  A interesting choice for computations could be the Galois field $k = \mathbb Z_2$.

\begin{proposition}[Examples of K-ideals]
Let the Markov matrix $P$ with structure $\mathcal G$ be reversible.
\begin{enumerate}
\item
The real vector $P_{v\to w}$,  $(v\to w) \in \arcs$, is a point of the intersection of the variety of the K-ideal with $S(\mathcal D) = \bigtimes_{v \in V} S(v)$, where
\begin{equation*}
  S(v) = \setof{P_{a}\in \reals_+^{\out(v)}}{\sum_{a\in\out(v)} P_{a}(w) \le 1}.
\end{equation*}
\item
Let $(X_n)_{n\ge 0}$ be the stationary Markov chain with transition $P$. Then the real vector of joint probabilities
$p(v,w)=\probof{X_n=u,X_{n+1}=v}$, $(v\to w)\in \arcs$, is a point in the intersection of the K-variety and the simplex
\begin{equation*}
  S(\arcs)=\setof{p\in \reals_+^\arcs}{\sum_{a\in\arcs} P(a) \le 1}.
\end{equation*}
\end{enumerate}
\end{proposition}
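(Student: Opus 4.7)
The plan is to invoke Kolmogorov's theorem directly for part (1), and then reduce part (2) to part (1) by factoring joint probabilities as $p(v,w)=\pi(v)P_{v\to w}$ and observing that vertex‐products along a closed path are invariant under reversal.

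For part (1), the membership in $S(\mathcal D)$ is immediate: by the very definition of a Markov matrix supported on $\mathcal G$ from the introduction, $P_{v\to w}\in\reals_+$ and $\sum_{w\in\neigh(v)}P_{v\to w}\le 1$, so $(P_a:a\in\out(v))\in S(v)$ for every $v$. For the K‐variety part, I would first note that every generator $P^\omega-P^{r(\omega)}$ of the K‐ideal is indexed by a circuit, and that the monomial $P^\omega$ depends only on the circuit class (same traversal counts), so it is equivalent to check $P^\omega=P^{r(\omega)}$ for closed paths $\omega$. This is precisely equation \eqref{eq:rev-circ} of Kolmogorov's theorem, which is assumed since $P$ is reversible. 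Hence $P$ annihilates every generator of the K‐ideal and lies in the K‐variety.

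For part (2), let $\omega=v_0v_1\cdots v_{n-1}v_0$ be a closed path and write $p(v,w)=\pi(v)P_{v\to w}$. Then
\begin{equation*}
p^\omega=\prod_{k=1}^{n}\pi(v_{k-1})P_{v_{k-1}\to v_k}=\Bigl(\prod_{k=0}^{n-1}\pi(v_k)\Bigr)\,P^\omega,
\end{equation*}
while
\begin{equation*}
p^{r(\omega)}=\prod_{k=1}^{n}\pi(v_k)P_{v_k\to v_{k-1}}=\Bigl(\prod_{k=1}^{n}\pi(v_k)\Bigr)\,P^{r(\omega)}.
\end{equation*}
Because $\omega$ is closed ($v_n=v_0$), the two $\pi$‐products coincide, and by part (1) we have $P^\omega=P^{r(\omega)}$. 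Therefore $p^\omega=p^{r(\omega)}$ for every circuit, i.e.\ $p$ lies in the K‐variety. To see $p\in S(\arcs)$, observe $p(a)\ge 0$ and
\begin{equation*}
\sum_{a\in\arcs}p(a)=\sum_{v\neq w}\probof{X_n=v,X_{n+1}=w}\le 1.
\end{equation*}

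There is no real obstacle; the only point requiring a brief justification is that the generators of the K‐ideal, although indexed by circuits, may be tested on closed paths (because the monomial $P^\omega$ depends only on the multiset of arcs traversed, hence on the circuit class), so that Kolmogorov's theorem in the form \eqref{eq:rev-circ} applies verbatim. The factorization $p=\pi\cdot P$ then transports the binomial identities from $P$ to $p$ once one notices that vertex counts along a closed path and its reversal agree.
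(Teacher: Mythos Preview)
Your proof is correct and follows essentially the same route as the paper: part (1) is referred directly to Kolmogorov's theorem, and part (2) is obtained by multiplying through by the invariant probabilities $\pi(v_i)$ so that the vertex products cancel along the closed path. The paper adds the remark that for the joint distribution the Kolmogorov equations are in fact trivially satisfied because $p(v,w)=p(w,v)$ by 2-reversibility, which you could mention as a shortcut; your explicit verification of the simplex constraints is a nice addition.
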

\begin{proof}
  \begin{enumerate}
  \item It is the first part of the Kolmogorov's theorem.
  \item
Let $\omega=v_0\dots v_n v_0$ be a closed path. If $\pi$ is the stationary probability, by multiplying the Kolmogorov's
equations by the product of the initial probabilities at each transition, we obtain
    \begin{equation*}
      \pi(v_0) \pi(v_1)\cdots \pi(v_n) P_{v_0\to v_1} \cdots P_{v_n\to v_0} = \pi(v_0) \pi(v_n)\cdots \pi(v_n) P_{v_0\to v_n} \cdots P_{v_1\to v_0},
    \end{equation*}
hence
\begin{equation*}
p(v_0,v_1) p(v_1,v_2) \cdots p(v_n,v_0) = p(v_0,v_n)p(v_n,v_{n-1})\cdots p(v_1,v_0).
\end{equation*}
  \end{enumerate}
However, in this case the Kolmogorov's equations are trivially satisfied as $p(v,w)=p(w,v)$.
\end{proof}

The K-ideal has a finite basis because of the Hilbert's basis theorem. Precisely, a finite basis is obtained by restricting to cycles, which are finite in number. We underline that here we consider all the cycles, not just a generating set of cycles. The related result by \cite[Th. 1]{mitrophanov:2004} is discussed in the next subsection.

\begin{proposition}[Cycle basis of the K-ideal]\label{prop:cyclebasis}
The K-ideal is generated by the set of binomials $P^\omega - P^{r(\omega)}$, where $\omega$ is cycle.
\end{proposition}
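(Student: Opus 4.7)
The plan is to show, by induction on the length $n$ of the closed path $\omega$, that the binomial $P^\omega-P^{r(\omega)}$ lies in the ideal $J$ generated by $\set{P^\sigma - P^{r(\sigma)} : \sigma \text{ cycle}}$. Since $\omega \mapsto P^\omega$ depends only on the arc-traversal counts $N_a(\omega)$, rotating the base vertex of a circuit does not change the monomial, so it suffices to work with closed paths.

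For the inductive step, assume $\omega=v_0v_1\cdots v_nv_0$ is not elementary. Then there exist positions $0\le i<j\le n+1$ with $(i,j)\ne(0,n+1)$ such that $v_i=v_j$ (writing $v_{n+1}=v_0$). Split $\omega$ at this repeated vertex into two strictly shorter closed paths: the inner loop $\omega_1=v_iv_{i+1}\cdots v_j$, which is closed at $v_i$, and the outer closed path $\omega_2=v_0\cdots v_i v_{j+1}\cdots v_nv_0$, obtained by excising $\omega_1$. Because the arc-counts add, $P^\omega=P^{\omega_1}P^{\omega_2}$; by the same reasoning applied to the reversed path, $P^{r(\omega)}=P^{r(\omega_1)}P^{r(\omega_2)}$. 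The key algebraic identity is then the standard telescoping
\begin{equation*}
P^\omega-P^{r(\omega)}=P^{\omega_1}\left(P^{\omega_2}-P^{r(\omega_2)}\right)+P^{r(\omega_2)}\left(P^{\omega_1}-P^{r(\omega_1)}\right),
\end{equation*}
which exhibits $P^\omega-P^{r(\omega)}$ as a polynomial combination of two shorter closed-path binomials, each of which lies in $J$ by the inductive hypothesis.

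The base case is $\omega$ elementary, in which case $\omega$ represents a cycle and $P^\omega-P^{r(\omega)}$ is by definition a generator of $J$. (Trivial closed paths of length one are excluded since $\mathcal D$ has no loops, and the length-two case $vwv$ gives the zero binomial.) Combined with the generation of the K-ideal by all circuit binomials, proved at the definitional level, this yields the claim.

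The main obstacle is verifying that the decomposition above is genuinely a decomposition at the level of arc-multisets in every case—in particular handling the boundary case $j=n+1$, where the "repeated vertex" coincides with $v_0$ so the split just cuts $\omega$ into two closed sub-paths at $v_0$—and confirming that both $\omega_1$ and $\omega_2$ are strictly shorter than $\omega$ so the induction terminates. Once this bookkeeping is done, the rest is a routine binomial manipulation.
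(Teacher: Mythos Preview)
Your proof is correct and takes essentially the same approach as the paper's: both split a non-elementary closed path at a repeated vertex into shorter closed sub-paths whose arc-multisets partition that of $\omega$, then reduce the binomial $P^\omega-P^{r(\omega)}$ to the sub-path binomials. The only difference is packaging---the paper iterates the split until all pieces are cycles and then appeals (implicitly) to the product-telescoping identity $\prod P^{\omega_i}-\prod P^{r(\omega_i)}\in J$, whereas you do one split at a time and write out the binary telescoping explicitly, which is arguably cleaner as an ideal-membership argument.
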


\begin{proof}
Let $\omega=v_0v_1\cdots v_0$ be a closed path which is not elementary and consider the least $k\ge1$ such that $v_k=v_{k'}$ for some $k'<k$. Then the sub-path $\omega_1$ between the $k'$-th vertex and the $k$-th vertex is an elementary closed path and the residual path $\omega_2=v_0\cdots v_{k'}v_{k+1}\cdots v_0$ is closed and shorter than the original one. The arcs of $\omega$ are in 1-to-1 correspondence with the arcs of $\omega_1$ and $\omega_2$, hence $N_a(\omega) = N_a(\omega_1)+N_a(\omega_2)$, $a\in\arcs$. The procedure can be iterated and stops in a finite number of steps.  Hence, given any closed path $\omega$, there exists a finite sequence of cycles $\omega_1,\dots,\omega_l$, such that the list of arcs in $\omega$ is partitioned into the lists of arcs of the $\omega_i$'s. From $P^{\omega_i}-P^{r(\omega_i)} = 0$, $i=1,\dots,l$, it follows
\begin{equation*}
 P^\omega = \prod_{i=1}^l P^{\omega_i} = \prod_{i=1}^l P^{r(\omega_i)} = P^{r(\omega)}.
\end{equation*}

\end{proof}
The K-ideal is generated by a finite set of binomials and this set has the same number of elements as the set of undirected cycles of $\mathcal G$. If the graph is a tree, the cycles reduce to those of the type $v_1\to v_2\to v_1$ and the binomials are identities, hence the K-ideal is equal the the full ring. This is an involved way to prove all stationary Markov process on a tree is reversible \cite[Lemma 1.5]{kelly:1979}.

The cycle basis of Proposition  \ref{prop:cyclebasis} belongs to the special class of bases, namely Gr\"obner bases. We refer to the textbooks \cite{cox|little|oshea:1997} and \cite{kreuzer|robbiano:2000} for a detailed
discussion. We review the basic definitions of this theory, which is based on the existence of a monomial order $\succ$, i.e. a total order on monomial terms which is compatible with the product. Given such an order, the leading term $\LT(f)$ of the polynomial $f$ is defined. A generating set is a \emph{Gr\"obner basis} if the set of leading terms of the ideal is generated by the leading terms of monomials of the generating set. A Gr\"obner basis is \emph{reduced} if the coefficient of the leading term of each element of the basis is 1 and no monomial in any element of the basis is in the ideal generated by the leading terms of the other element of the basis. The Gr\"obner basis property depends on the monomial order. However, a generating set is said to be a \emph{universal} Gr\"obner basis if it is a Gr\"obner basis for all monomial orders.

The finite algorithm for testing the Gr\"obner basis property depends on the definition of \emph{syzygy}.  The syzygy of two polynomial $f$ and $g$ is the polynomial
\begin{equation*}
  \syz(f,g) = \frac{\LT(g)}{\gcd(\LT(f),\LT(g))}f - \frac{\LT(f)}{\gcd(\LT(f),\LT(g))}g.
\end{equation*}
A generating set of an ideal is a Gr\"obner basis if, and only if, it contains the syzygy $\syz(f,g)$ whenever it contains the polynomials $f$ and $g$, see \cite[Ch 6]{cox|little|oshea:1997} or \cite[Th. 2.4.1 p. 111]{kreuzer|robbiano:2000}.

\begin{proposition}[Universal G-basis]\label{prop:cycles}
The cycle basis of the K-ideal is a\marginpar{the?} reduced universal Gr\"obner basis.
\end{proposition}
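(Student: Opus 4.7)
The plan is to realize the K-ideal as a lattice ideal, show that its Graver basis coincides with the cycle basis, and then invoke Sturmfels' theorem on Graver bases to obtain universality; the reducedness is then a short combinatorial verification about the arc structure of simple cycles.

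First I would describe the underlying lattice $L \subseteq \mathbb Z^\arcs$ as the set of integer \emph{antisymmetric circulations} on the digraph $\mathcal D$, i.e.\ vectors $u$ with $u_a + u_{r(a)} = 0$ for every $a \in \arcs$ and $\sum_{a \in \out(v)} u_a = 0$ for every $v \in V$. Every cycle vector $u(\omega) = N(\omega) - N(r(\omega))$ clearly lies in $L$, and by iteratively peeling off elementary sub-closed-paths (the argument already used in Proposition~\ref{prop:cyclebasis}), every $u \in L$ decomposes into cycle vectors. Hence the K-ideal coincides with the lattice ideal $I_L$.

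Next I would identify the primitive binomials of $I_L$ (the Graver basis) with the cycle binomials. Given a primitive $P^\alpha - P^\beta \in I_L$, set $u = \alpha - \beta \in L$; the positive arc-set $A^+ = \{a : u_a > 0\}$ carries a non-negative flow, because the vertex balance for $u$ combined with $u_a + u_{r(a)} = 0$ translates into ordinary flow balance on $A^+$. Hence $A^+$ contains a simple directed cycle $\omega_0 \in \cycles$, and $u(\omega_0) \in L$ satisfies $u(\omega_0)^+ \le u^+$ and $u(\omega_0)^- \le u^-$ conformally. Primitivity forces $u = u(\omega_0)$, so $u$ is $\{0,\pm 1\}$-valued and supported exactly on the arcs of $\omega_0$ together with those of $r(\omega_0)$. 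Therefore the Graver basis is precisely $\{P^\omega - P^{r(\omega)} : \omega \in \cycles\}$, and by \cite[Prop.~4.11]{sturmfels:1996} this set is a universal Gröbner basis.

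For reducedness under every term order, each cycle binomial is monic, so it remains to check that for distinct $\omega, \omega' \in \cycles$ with $\omega' \neq r(\omega)$ no monomial $P^{\omega'}$ or $P^{r(\omega')}$ divides $P^\omega$ or $P^{r(\omega)}$: such a divisibility would translate into an arc-set inclusion between two simple directed cycles, and since every vertex of a simple cycle has a unique outgoing arc inside that cycle, the two cycles would then have to coincide. The main obstacle I anticipate is the primitive-element characterization in the second step: extracting a single $\omega_0$ from $A^+$ via a non-negative flow requires a careful combination of vertex balance with antisymmetry, and one must rule out exotic primitives whose supports mix arcs drawn from several overlapping cycles or from both sides of the reversal $r$.
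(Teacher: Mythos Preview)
Your argument is correct, but it takes a different route from the paper. The paper proves the universal Gr\"obner property directly via Buchberger's criterion: for any term order it computes the $S$-polynomial of two cycle binomials $P^{\omega_1}-P^{r(\omega_1)}$ and $P^{\omega_2}-P^{r(\omega_2)}$, distinguishing whether the two cycles share arcs. When they do not, the leading terms are coprime and the $S$-polynomial reduces trivially; when they share a common arc-path $\alpha$, the $S$-polynomial factors as a monomial times the binomial of the cycle obtained by gluing $\omega_1-\alpha$ and $r(\omega_2-\alpha)$, hence reduces to zero against the cycle basis. Since the computation is independent of the chosen order, universality follows. Reducedness is argued exactly as you do.

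Your approach instead first identifies the K-ideal with the lattice ideal of the antisymmetric circulations, then shows by a conformal flow-decomposition that the primitive binomials are precisely the cycle binomials, and finally quotes Sturmfels to conclude that the Graver basis is a universal Gr\"obner basis. This is sound; the conformal peeling step and the fact that a proper arc-set inclusion between simple directed cycles forces equality are both correct. Note, however, that in the paper's logical order the Graver characterization (Proposition~\ref{pr:graver}) and the identification of the K-ideal with a toric ideal (Theorem~\ref{pr:toric}) come \emph{after} Proposition~\ref{prop:cycles}, so your proof effectively reorganizes the exposition and derives the present proposition as a corollary of those later results. The gain of your route is economy and a cleaner conceptual picture once the toric machinery is in place; the gain of the paper's route is that it is elementary and self-contained at this point, relying only on the explicit syzygy computation and the combinatorics of splicing two cycles along a common path, without yet invoking Graver bases or the toric identification.
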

\begin{proof}
Choose any monomial order $\succ$ and let $\omega_1$ and $\omega_2$ be two cycles with $\omega_i \succ r(\omega_i)$, $i=1,2$. Assume first they do not have any arc in common. In such a case $\gcd(P^{\omega_1},P^{\omega_2})=1$ and the syzygy is
\begin{multline*}
  \syz(P^{\omega_1}-P^{r(\omega_1)},P^{\omega_2}-P^{r(\omega_2)}) = \\ P^{\omega_2}(P^{\omega_1}-P^{r(\omega_1) }) - P^{\omega_1}(P^{\omega_2}-P^{r(\omega_2)}) = P^{\omega_1}P^{r(\omega_2)} - P^{r(\omega_1)}P^{\omega_2},
\end{multline*}
which belongs to the K-ideal.

Let now $\alpha$ be the common part, that is $\gcd(P^{\omega_1},P^{\omega_2}) = P^\alpha$. The syzygy of $P^{\omega_1} - P^{r(\omega_1)}$ and $P^{\omega_2} - P^{r(\omega_2)}$ is
\begin{equation*} P^{\omega_1-\alpha} P^{r(\omega_2)} - P^{\omega_2-\alpha} P^{r(\omega_1)} = P^{r\alpha}(P^{\omega_1-\alpha} P^{r(\omega_2)-r\alpha} - P^{\omega_2-\alpha} P^{r(\omega_1)-r\alpha}), \end{equation*}
which again belongs to the K-ideal because $\omega_1-\alpha+r(\omega_2-\alpha)$ is a cycle. In fact $\omega_1-\alpha$ and $\omega_2-\alpha$ have in common the extreme vertices, corresponding to the extreme vertices of $\alpha$. Notice that $\alpha$ is the common part of $\omega_1$ and $\omega_2$ only if it is traversed in the same direction by the both cycle. The previous proof does not depend on the choice of the leading term of the binomials, therefore the Gr\"obner basis is universal. The Gr\"obner basis is reduced because no monomial of a cycle can divide a monomial of a different cycle.
\end{proof}
\begin{example}[Running example continue]
Figure \ref{fig:1} is an illustration of the proof.
\begin{figure}
  \begin{center}
    \begin{tabular}{c}
\includegraphics[scale=.5]{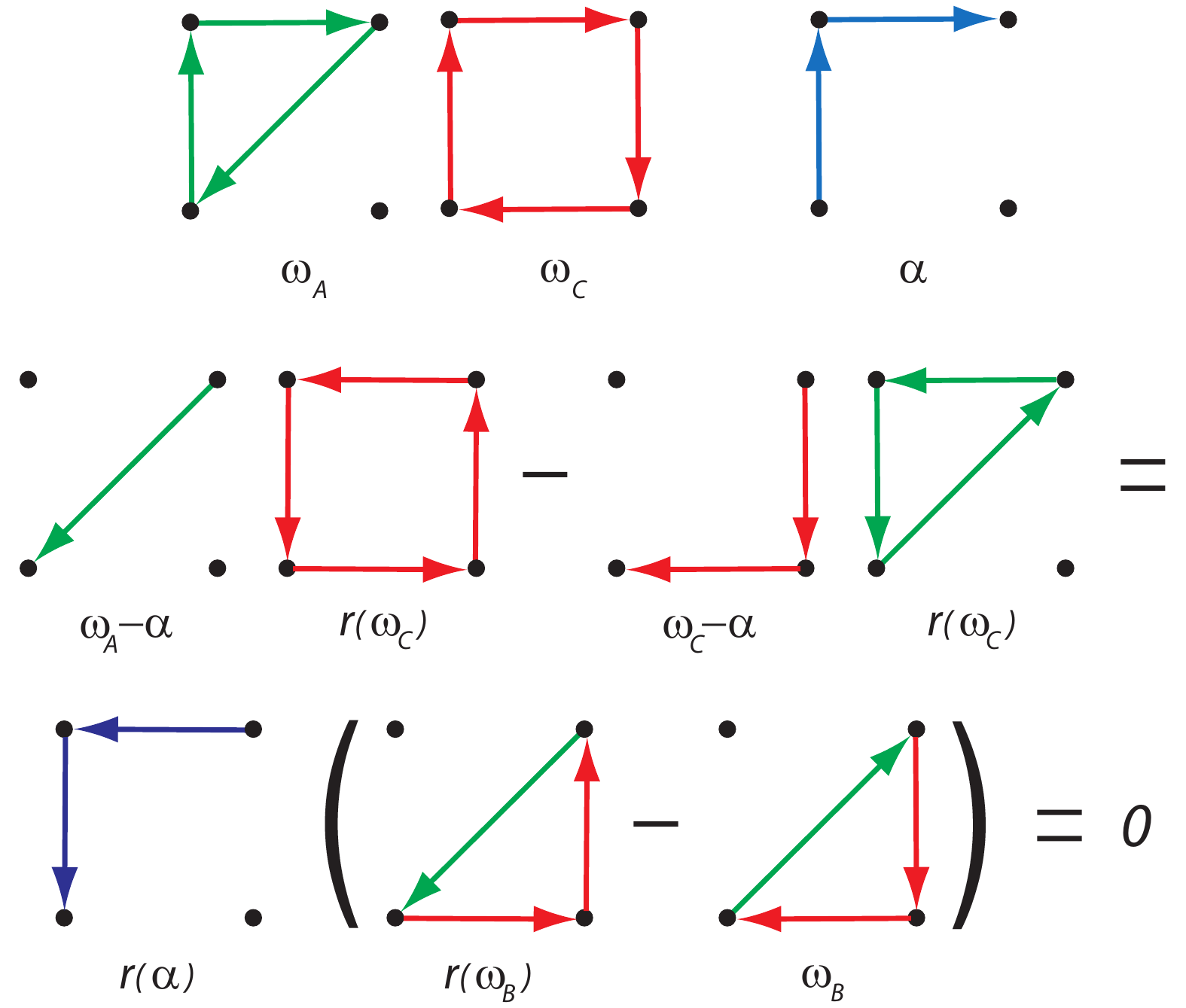}
\end{tabular}
  \end{center}
  \caption{Running example: illustration of the proof of Proposition \ref{prop:cycles} \label{fig:1}}
\end{figure}

On this example we can see why the two $\omega_A$ and $\omega_C$ do not generate the K-ideal. In fact, from
\begin{align*}
  P_{1\to 2} P_{2\to 4} P_{4\to 1} &= P_{1\to 4} P_{4\to 2} P_{2\to 1} \\
  P_{1\to 2} P_{2\to 3} P_{3\to 4} P_{4\to 1} &= P_{1\to 4} P_{4\to 3} P_{3\to 2} P_{2\to 1}
\end{align*}
it follows
\begin{equation*}
  P_{4\to 1} P_{1\to 2} P_{2\to 1} P_{1\to 4}\left(P_{2\to 3} P_{3\to 4} p_{4\to 2} -  P_{2\to 4} P_{4\to 3} P_{3\to 2} \right) = 0
\end{equation*}
which, in turn, gives the binomial of $\omega_B$ if $P_{4\to 1} P_{1\to 2} P_{2\to 1} P_{1\to 4} \ne 0$ and, therefore, the factor $P^\alpha P^{r(\alpha)}$ can be cleared. This is confirmed by the use of a symbolic algebraic software such as \cocoa, see \cite{CocoaSystem}. This computation shows that the $\omega_B$ equation does not belong to the ideal generated by the $\omega_A, \omega_C$ equations unless we add the condition $P_{4\to 1} P_{1\to 2} P_{2\to 1} P_{1\to 4} \ne 0$. Notice that $\omega_A$ and $\omega_C$ are the cycles obtained from the spanning tree $3\to 4 \to 1\to 2$.
\end{example}

\begin{example}[Running example: Monomial basis of the quotient ring]
Take any order on vertexes, e.g. $1 \prec 2 \prec 3 \prec 4$, and derive a lexicographic order on arcs: $1\to 2\prec 1\to 4\prec 2\to 1\prec 2\to 3\prec 2\to 4\prec 3\to 2\prec 3\to 4\prec 4\to 1\prec 4\to 2\prec 4\to 3$. Take the same order on indeterminates $P_a$, $a\in\arcs$, and the lexicographic order on monomials. We check that the leading terms of the there binomials in the G-basis are $P^{r(\omega_A)}, P^{r(\omega_B)}, P^{r(\omega_C)}$, see Figure \ref{fig:6}. The exponents of the leading terms of the G-basis are
\begin{equation*}\footnotesize\setlength{\arraycolsep}{3pt}\begin{array}{lcccccccccc}
\arcs &1\to 2 & 1\to 4 & 2\to 1 & 2\to 3 & 2\to 4 & 3\to 2 & 3\to 4 & 4\to 1 & 4\to 2 & 4\to 3 \\ \hline
N(r(\omega_A))&0&1&1&0&0&0&0&0&1&0 \\
N(r(\omega_B))&0&0&0&0&1&1&0&0&0&1 \\
N(r(\omega_C))&0&1&1&0&0&1&0&0&0&1
   \end{array}
\end{equation*}
Each monomial $P^{N}= \prod_{a\in\arcs} P_a^{N_a}$, $N\in \integers_{\ge}^\arcs$, is reduced by the K-ideal to a monomial whose exponent does not contain any of the counts in the table. E.g. $P_{1\to 4}P_{2\to 1}P_{3\to 2}$ is an element of the monomial basis of the polynomial ring $\mod$ the K-ideal.
\end{example}

\subsection{Cycle and cocycle spaces}
We adapt to our context some standard tools of algebraic graph theory, namely the cycle an cocycle spaces, see e.g. \cite[Ch 2]{berge:1985} and \cite[II.3]{bollobas:1998}.

Let $\mathcal C$ be the set of cycles.  For each cycle $\omega \in \mathcal C$ we define the \emph{cycle vector} of $\omega$ to be $z(\omega) = (z_a(\omega): a\in\arcs)$, where
\begin{equation*}
  z_a(\omega) = \begin{cases}
+1 & \text{if $a$ is an arc of $\omega$}, \\ -1 & \text{if $r(a)$ is an arc of $\omega$},\\
 0 & \text{otherwise.}
  \end{cases}
\end{equation*}
Note that $z_{r(a)}(\omega)=-z_a(\omega)$.  If $z^+$ and $z^-$ are the positive and the negative part of $z$, respectively, then $z^+_a(\omega)=N_a(\omega)$ and $z^-_a(\omega)=N_a(r(\omega))$. It follows that $P^\omega = P^{N(\omega)} = P^{z^+(\omega)} = \prod_{a\in\arcs} P_a^{z_a^+(\omega)}$ and
\begin{equation}
  \label{eq:omegavscycle}
P^\omega-P^{r(\omega)} = P^{z^+(\omega)}-P^{z^-(\omega)}.
\end{equation}
More generally, the definition can be is extended to any circuit $\omega$
by defining
\begin{equation*}
  z_a(\omega) = N_a(\omega)-N_{r(a)}(\omega).
\end{equation*}
The equality $z^+(\omega) = N(\omega)$ holds if, and only if, $a\in\omega$ implies $r(a) \notin\omega$, $a\in\arcs$.

Let $Z(\mathcal D)$ be the \emph{cycle space}, i.e. the vector space generated in $\reals^{\arcs}$ by the cycle vectors.

For each proper subset $B$ of the set of vertices, $\emptyset \ne B \subsetneq V$ we define the \emph{cocycle vector} of $B$ to be $u(B) = (u_a(B): a \in \arcs)$, with
\begin{equation*}
  u_a(B) = \begin{cases}
+1 & \text{if $a$ exits from $B$,} \\
-1 & \text{if $a$ enters into $B$,} \\
0 & \text{otherwise.}
  \end{cases} \qquad  a\in\arcs
\end{equation*}
See an example in Figure \ref{fig:cocycle}.
\begin{figure}
  \centering
  \includegraphics[scale=.5]{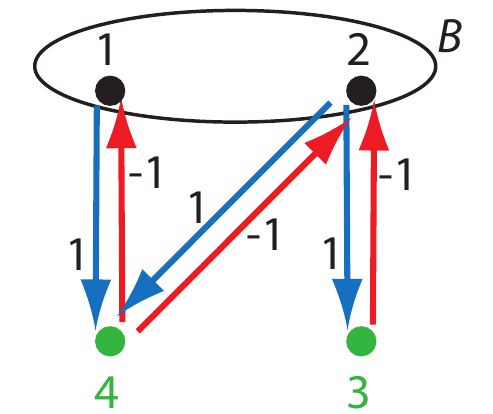}
  \caption{Example of cocycle vector. All arcs not shown take value 0. \label{fig:cocycle}}
\end{figure}
Note that $u_{r(a)}(B)=-u_a(B)$.

Let $U(\mathcal D)$ be the \emph{cocycle space}, i.e. the vector space generated in $\reals^{\arcs}$ by the cocycle vectors. Let $U$ be the matrix whose rows are the cocycle vectors $u(B)$, $\emptyset \ne B \subsetneq V$.  The matrix $U=[u_a(B)]_{\emptyset \ne B\subsetneq V,a\in \arcs}$ is the \emph{cocycle matrix}.

The cycle space and the cocycle space are orthogonal in $\reals^\arcs$. In fact, for each cycle vector $z(\omega)$ and cocycle vector $u(B)$, we have
\begin{equation*}
  z_{r(a)}(\omega) u_{r(a)}(B) = (-z_a(\omega))(-u_a(B)) = z_a(\omega)u_a(B), \quad a\in\arcs,
\end{equation*}
so that
\begin{align*}
  z(\omega)\cdot u(B) &= \sum_{a\in\arcs} z_a(\omega) u_a(B) = \sum_{a\in\omega} z_a(\omega) u_a(B) + \sum_{r(a)\in\omega} z_a(\omega) u_a(B) \\ &= 2 \sum_{a\in\omega} z_a(\omega) u_a(B) = 2 \left[\sum_{a\in\omega,u_a(B) = +1} 1 - \sum_{a\in\omega,u_a(B) = -1} 1\right] = 0.
\end{align*}
It is shown e.g. in the previous references that the cycle space is the orthogonal complement of the cocycle space for undirected graphs. In our setting it is the orthogonal complement relative to the subspace of vectors $x$ such that $x_{r(a)}=-x_a$. As we are interested in elements of the cycle space with integer entries, i.e. those elements $z=(z_{a}\colon a \in \arcs)$ of the cycle space that can be exponents of a monomial $P^z = \prod_{a\in\arcs} P_a^{z_a}$, we are going to use the following matrix encoding of our problem.
\begin{definition}[Model matrix]\label{def:modelmatrix}
Consider the matrix $E = [E_{e,a}]_{e\in\edges, a\in\arcs}$ whose element $E_{e,a}$ in position $(e,a)$ is 1 if the arc $a$ is une of the directions of the edge $e$, zero otherwise. Let $U$ be the cocycle matrix. The \emph{model martix} is the block matrix
\begin{equation*}
  A=\begin{bmatrix}E \\ U\end{bmatrix}.
\end{equation*}
It follows $Z(\mathcal D) = \ker A$.  A \emph{lattice basis} of the lattice $\integers(\mathcal D) = \ker A \cap \integers^\arcs$ is a linear basis of $\ker A$ with integer entries.
\end{definition}

The matrix $A$ has dimension $ \# \mathcal E +\#V -1$. In fact $E$ can be re-arranged as $[I_{\# \mathcal E} \vert I_{\# \mathcal E}]$, with $I_{\# \mathcal E}$ the identity matrix, and $U$ has $\# V -1$ linearly independent rows, the dimension of the cocycle space.

\begin{table}
\caption{Running example. Model matrix $A$ and a basis of $\integers(\mathcal D)$.\label{tab:A}}
{
\begin{equation*}
\begin{array}{c|rrrrr|rrrrr|} \multicolumn{1}{c}{}& 1\to 2 & 1\to 4 & 2\to 3 &
2\to 4 &\multicolumn{1}{c}{3\to 4} &
 2\to 1 & 4\to 1 & 3\to 2 & 4\to 2 & \multicolumn{1}{c}{4\to 3}\\
\cline{2-11}
 \overline{12} & 1 & 0 & 0 & 0 & 0 & 1 & 0 & 0 & 0 & 0 \\
 \overline{14} & 0 & 1 & 0 & 0 & 0 & 0 & 1 & 0 & 0 & 0 \\
 \overline{23} & 0 & 0 & 1 & 0 & 0 & 0 & 0 & 1 & 0 & 0 \\
 \overline{24} & 0 & 0 & 0 & 1 & 0 & 0 & 0 & 0 & 1 & 0 \\
 \overline{34} & 0 & 0 & 0 & 0 & 1 & 0 & 0 & 0 & 0 & 1 \\ \cline{2-11}
{\mathbf{ \{1\}}} &  \mathbf{1} &  \mathbf{1} &  \mathbf{0} &  \mathbf{0} &  \mathbf{0} &  \mathbf{-1} &  \mathbf{-1}
&  \mathbf{0} &  \mathbf{0} &  \mathbf{0} \\
 \{2\} & -1 & 0 & 1 & 1 & 0 & 1 & 0 & -1 & -1 & 0 \\
\mathbf{ \{3\}} &  \mathbf{0} &  \mathbf{0} &  \mathbf{-1} &  \mathbf{0} &  \mathbf{1} &  \mathbf{0} &  \mathbf{0}
&  \mathbf{1} &  \mathbf{0} &  \mathbf{-1} \\
\{4\} & 0 & -1 & 0 & -1 & -1 & 0 & 1 & 0 & 1 & 1 \\
  \mathbf{\{12\}} &  \mathbf{0} &  \mathbf{1} &  \mathbf{1} &  \mathbf{1} &  \mathbf{0} &  \mathbf{0} &  \mathbf{-1}
  &  \mathbf{-1} &  \mathbf{-1} &  \mathbf{0} \\
 \{13\} & 1 & 1 & -1 & 0 & 1 & -1 & -1 & 1 & 0 & -1 \\
 \{14\} & 1 & 0 & 0 & -1 & -1 & -1 & 0 & 0 & 1 & 1 \\
 \{23\} & -1 & 0 & 0 & 1 & 1 & 1 & 0 & 0 & -1 & -1 \\
 \{24\} & -1 & -1 & 1 & 0 & -1 & 1 & 1 & -1 & 0 & 1 \\
 \{34\} & 0 & -1 & -1 & -1 & 0 & 0 & 1 & 1 & 1 & 0 \\
 \{123\} & 0 & 1 & 0 & 1 & 1 & 0 & -1 & 0 & -1 & -1 \\
 \{124\} & 0 & 0 & 1 & 0 & -1 & 0 & 0 & -1 & 0 & 1 \\
 \{134\} & 1 & 0 & -1 & -1 & 0 & -1 & 0 & 1 & 1 & 0 \\
 \{234\} & -1 & -1 & 0 & 0 & 0 & 1 & 1 & 0 & 0 & 0\\
 \cline{2-11}
\multicolumn{11}{c}{}\\
 \cline{2-11}
 z(\omega_A) &1& -1& 0& 1& 0& -1& 1& 0& -1& 0\\
z(\omega_B)  &0& 0& 1& -1& 1& 0& 0& -1& 1& -1\\
 \cline{2-11}
\end{array}
\end{equation*}
}
\end{table}
\begin{example}[Running example continue]
Table \ref{tab:A} shows the matrix $A$ and a lattice basis of $\integers(\mathcal D)$ computed with \cocoa.  The top matrix is the $E$ matrix; the bottom matrix is the $U$ matrix, where three linearly independent rows are highlighted. The two bottom row vectors are the lattice basis.
\end{example}

\subsection{Toric ideal}
We want to show that the K-ideal is the toric ideal of the model matrix $A$, see Definition \ref{def:modelmatrix}. Basic definitions and theory are in \cite[Ch 4]{sturmfels:1996}, see also \cite{bigatti|robbiano:2001}.

Consider the polynomial ring $\rationals[P_a \colon a\in \arcs]$ and  the Laurent polynomial ring $\rationals[t_e^{\pm 1}, t_B^{\pm 1} \colon e\in\edges, \emptyset \ne B\subsetneq V]$, together with their homomorphism $h$ defined by
  \begin{equation}\label{eq:A-model}
    h \colon P_a \longmapsto \prod_e t_e^{E_{e,a}}\prod_{B} t_B^{u_a(B)} = t^{A(a)}.  \end{equation}
As $E_{e,a} = E_{e,r(a)}$ for all edge $e$ and arc $a$, the first factor in \eqref{eq:A-model} is a symmetric function $s(v,w) = \prod_e t_e^{E_{e,v\to w}} = s(w,v)$. We could write
\begin{equation}\label{eq:h-homo}
  h \colon P_{v\to w} \longmapsto s(v,w) \prod_{B} t_B^{u_{v\to w}(B)}.
\end{equation}

The kernel $I(A)$ of $h$ is called the \emph{toric ideal} of $A$,
\begin{equation*}
 I(A) = \setof{f\in \rationals[P_a:a\in \arcs]}{f(t^{A(a)} \colon a \in \arcs)=0}.
\end{equation*}
The toric ideal $I(A)$ is a prime ideal and the binomials
\begin{equation*}
  P^{z^+} - P^{z^-}, \quad z \in \integers^\arcs, \quad Az=0,
\end{equation*}
are a generating set of $I(A)$ as a $\rationals$-vector space.  A finite generating set of the ideal is formed by selecting a finite subset of such binomials. The basis we find is a Graver basis.

We recall the definition of Graver basis as it is given in \cite{deloera|hemmecke|onn|weismantel:2008} and we apply it to the \emph{cycle lattice} $\integers(\mathcal D)$. We introduce a partial order and its set of minimal elements as follows.
\begin{definition}[Graver basis]\
Let $z_1$ and $z_2$ be two element of the cycle lattice $\integers(\mathcal D)$.
\begin{enumerate}
\item $z_1$ is \emph{conformal} to $z_2$, $z_1 \sqsubseteq z_2$, if the component-wise product is non-negative (i.e. $z_1$ and $z_2$ are in the same quadrant) and $|z_1| \le |z_2|$ component-wise, i.e. $z_{1,a}z_{2,a}\ge 0$ and $z_{1,a} \le z_{2,a}$ for all $a \in \arcs$.
\item A \emph{Graver basis} of $\integers(\mathcal D)$ is the set of the minimal elements with respect to the conformity partial order $\sqsubseteq$.
  \end{enumerate}
\end{definition}
\begin{proposition}\label{pr:graver}\
\begin{enumerate}
\item\label{item:graver1}
For each cycle vector  $z \in \integers(\mathcal D)$, $z = \sum_{\omega \in \mathcal C} \lambda(\omega) z(\omega)$, $\lambda(\omega) \in \rationals$, there exist cycles $\omega_1, \dots, \omega_n \in \mathcal C$ and positive integers $\alpha(\omega_1),\dots,\alpha(\omega_n)$, such that $z^+\ge z^+(\omega_i)$, $z^-\ge z^-(\omega_i)$, $i=1,\dots,n$, and
\begin{equation*}
  z = \sum_{i=1}^n \alpha(\omega_i) z(\omega_i) .
\end{equation*}
\item
The set $\left\{z(\omega) \colon \omega \in \mathcal C \right\}$  is a \emph{Graver basis} of  $\integers (\mathcal D)$.
\end{enumerate}

\end{proposition}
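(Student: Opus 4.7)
The approach is to translate membership in $\integers(\mathcal D) = \ker A \cap \integers^\arcs$ into a statement about integer circulations on the directed graph $\mathcal D$, then invoke the standard cycle decomposition of such circulations.

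For item \ref{item:graver1}, the edge block of $A$ gives $Ez = 0$, which is equivalent to $z_{r(a)} = -z_a$ for every arc $a$; hence $z^+$ and $z^-$ are non-negative integer vectors supported on disjoint arcs, with $z^-_{r(a)} = z^+_a$. Applying the cocycle row of $A$ corresponding to the singleton $B=\{v\}$ and using this anti-symmetry yields the Kirchhoff conservation $\sum_{a\in\out(v)}z^+_a = \sum_{a\in\inv(v)}z^+_a$, so $z^+$ is a non-negative integer circulation on $\mathcal D$. A standard flow-decomposition argument then applies: starting from any arc in the support of $z^+$, the conservation property lets us extend to a directed walk that stays in the support, and finiteness forces a repeated vertex; the sub-walk from its first occurrence to its repeat is an elementary closed path $\omega_1 \in \mathcal C$. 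By construction $z^+(\omega_1) \le z^+$, and since $z^-_{r(a)} = z^+_a \ge 1$ for $a\in\omega_1$, also $z^-(\omega_1) \le z^-$. Replacing $z$ by $z - z(\omega_1)$ keeps the residual in $\integers(\mathcal D)$, preserves the signs of $z^+,z^-$, and strictly decreases $\|z^+\|_1$; iterating and collecting equal cycles gives $z = \sum_i \alpha(\omega_i) z(\omega_i)$ with $\alpha(\omega_i) \in \integers_{>0}$. The conformity bounds $z^\pm \ge z^\pm(\omega_i)$ hold for the \emph{original} $z$ because every residual along the iteration is pointwise dominated by $z^+$ (resp.\ $z^-$).

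For item 2, I verify both that each $z(\omega)$ is $\sqsubseteq$-minimal and that every non-zero $\sqsubseteq$-minimal element has this form. Let $\omega = v_0 v_1 \cdots v_{n-1}v_0$ and suppose $z' \in \integers(\mathcal D)$ with $z' \sqsubseteq z(\omega)$. Since $z(\omega)$ is $\{0,\pm1\}$-valued, conformality forces each coordinate of $z'$ either to vanish or to agree with the corresponding coordinate of $z(\omega)$. The edge equation $Ez'=0$ on $\overline{v_iv_{i+1}}$ couples the two arcs $v_i\to v_{i+1}$ and $v_{i+1}\to v_i$, forcing them to be both selected or both omitted; writing $x_i\in\{0,1\}$ for the selection indicator of the $i$-th edge, the singleton cocycle condition $u_{\{v_i\}}\cdot z'=0$ reduces, after a short calculation, to $x_i = x_{i-1}$ for every $i$. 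Hence $x_\bullet$ is constant and $z' = 0$ or $z' = z(\omega)$. For the converse, let $z \in \integers(\mathcal D)$ be non-zero and $\sqsubseteq$-minimal; by item \ref{item:graver1} we can write $z = \sum_i \alpha(\omega_i) z(\omega_i)$ with each $z(\omega_i) \sqsubseteq z$, so minimality forces $z = z(\omega_i)$ for some $i$, and a conformal non-trivial sum cannot collapse to a single summand unless exactly one term survives.

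The main obstacle is the propagation step in the minimality argument: one must chase the Kirchhoff identity around the cycle $\omega$ and verify that the edge-selection vector is rigidly forced to be constant. The rest is a standard circulation-decomposition argument repackaged in the $(E,U)$-matrix language introduced in Definition \ref{def:modelmatrix}.
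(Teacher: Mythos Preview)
Your proof is correct and, for item~1, follows the same circulation--decomposition route as the paper: both use the singleton cocycles to obtain Kirchhoff conservation on $z^+$, extract a cycle supported in $\arcs_+(z)$, subtract it, and iterate (the paper subtracts the maximal multiple $\alpha(\omega_1)z(\omega_1)$ at each step rather than one copy at a time, a cosmetic difference). For item~2 you are actually more thorough than the paper: the paper only records that each summand in the decomposition of item~1 satisfies $z(\omega_i)\sqsubseteq z$, which shows that every $\sqsubseteq$-minimal nonzero lattice element is a cycle vector, but leaves the converse---that each $z(\omega)$ is itself minimal---implicit; your edge-propagation argument via $Ez'=0$ and the singleton cocycles supplies exactly this direction. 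The final clause of your item~2 (``a conformal non-trivial sum cannot collapse\dots'') is superfluous, since once $z(\omega_1)\sqsubseteq z$ and $z$ is minimal you already have $z=z(\omega_1)$.
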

\begin{proof}
\begin{enumerate}
\item
For all $\omega\in\mathcal C$ we have $-z(\omega)=z(r(\omega))$, so that we can assume all the $\lambda(\omega)$'s to be non-negative. Notice also that we can arrange things in such a way that at most one of the two direction of each cycle has a non-zero coefficient. We define
  \begin{equation*}
    \arcs_+(z)=\setof{a\in\arcs}{z_a>0}, \quad \arcs_-(z)=\setof{a\in\arcs}{z_a<0},
  \end{equation*}
  and consider two subgraph of $\mathcal D$ with a the set of arcs restricted to $\arcs_+$, $\arcs_-$, respectively. We note that $r\arcs_+(z)=\arcs_-(z)$ and $r\arcs_-(z)=\arcs_+(z)$; in particular, both $\arcs_+(z)$, $\arcs_-(z)$ are not empty. We drop from now on the dependence on $z$ for ease of notation.

We show first there is a cycle whose arcs are in $\arcs_+$. If not, if a cycle of full graph $\mathcal D$ has one arc in $\arcs_+$, it would exists vertex $v$ such that $\out(v)\cap\arcs_+=\emptyset$ while $\inv(v)\cap\arcs_+\ne\emptyset$. Let $u(v)$ be the cocycle vector of $\set{v}$; we derive a contradiction to the assumption $z\cdot u(v) = 0$. In fact,
\begin{multline*}
  z \cdot u(v) = \sum_{a\in\arcs_+} z_a u_a(v) + \sum_{a\in\arcs_-} z_a u_a(v) \\ = \sum_{a\in\arcs_+} z_a u_a(v) + \sum_{a\in\arcs_+} z_{r(a)} u_{r(a)}(v) =
  2\sum_{a\in\arcs_+} z_a u_a(v) \ne 0
  \end{multline*}
because each of the terms $z_au_a(v)$, $a\in\arcs_+$, is either 0 or equal to $-z_a < 0$ if $a\in\inv(v)$.

Let $\omega_1$  be a cycle in $\arcs_+$ and define an integer $\alpha(\omega_1) \ge 1$ such that $z^+ -
\alpha(\omega_1) z^+(\omega_1) \ge 0$ and it is zero for at least one $a$. The vector $z^1 = z - \alpha(\omega_1)
z(\omega_1)$ belongs to the cycle space $\integers(\mathcal D)$, and moreover $\arcs_+(z^1) \subset \arcs_+(z)$.

By  repeating the same step a finite number of times we obtain a new representation of $z$ in the form $z =
\sum_{i=1}^n \alpha(\omega_i) z(\omega_i)$ where the support of each $\alpha(\omega_i) z^+(\omega_i)$ is contained in
$\arcs_+$. It follows
\begin{equation}\label{eq:z-conf}
z^+ = \sum_{i=1}^n  \alpha(\omega_i) z^+(\omega_i)\quad \textrm{and} \quad z^- = \sum_{i=1}^n \alpha(\omega_i)
z^-(\omega_i)
\end{equation}
\item
In the previous decomposition each $z(\omega_i)$, $i=1,\dots,n$, is conformal to $z$.  In fact, from $z^+\ge z^+(\omega_i)$ and $z^-\ge
z^-(\omega_i)$, it follows  $z_a z_a(\omega_i)=z^+_a z^+_a(\omega_i)-z^-_a z^-_a(\omega_i) \ge 0$ and $\vert z_a(\omega_i)\vert= z^+_a(\omega_i)- z^-_a(\omega_i)\le z_a^+ + z_z^-=\vert z_a \vert$. Therefore $z(\omega_i) \sqsubseteq z$.
\end{enumerate}

\end{proof}

\begin{example}[Running example continue]
We give an illustration of the previous proof. Consider the cycle vectors
\begin{equation*}
\setlength{\arraycolsep}{1.8pt}
\renewcommand{\arraystretch}{1.1}
\begin{array}{c c c c c c c c c c c c}
            & 1\to2&2\to1&2\to3&3\to2&3\to4&4\to3&4\to1&1\to4&2\to4&4\to2& \\
            \cline{2-11}
z(\boldsymbol{\textcolor{OliveGreen}{\omega_A}})= (& 1        &  -1     &  0      &  0      &  0      &  0      &  1      &  -1      &  1      & -1 & ) \\
z(\boldsymbol{\textcolor{blue}{\omega_B}})= (& 0        &   0     &  1      &  -1     &  1      & -1      &  0      &  0      &  -1     &  1  & )\\
z(\boldsymbol{\textcolor{red}{\omega_C}})= (& 1        &  -1     &  1      &  -1     &  1      &  -1     &  1      &  -1      &  0      &  0& )
\end{array}
\end{equation*}
and the element of the cycle space $z=z(\omega_A)+2z(\omega_B)+2z(\omega_C)$, see Figure \ref{fig:zeta-omega}.
\begin{figure}
  \begin{center}
  \includegraphics[width=9cm]{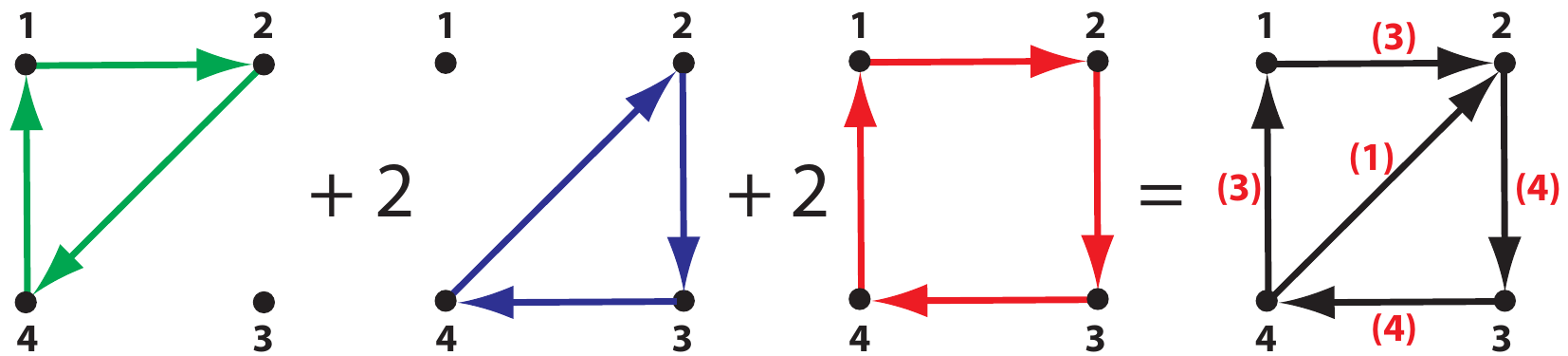}
\end{center}
  \caption{Running example. Cycle space.\label{fig:zeta-omega}}
\end{figure}
We have
\begin{equation*}
\setlength{\arraycolsep}{0.8pt}
\renewcommand{\arraystretch}{1.1}
\begin{array}{r c c r c l c l c l c l c l c l c l c l r}
z =& z(\boldsymbol{\textcolor{OliveGreen}{\omega_A}})+ 2 z(\boldsymbol{\textcolor{blue}{\omega_B}})+
2 z(\boldsymbol{\textcolor{red}{\omega_C}}) & =  (3&,&-3&,&4&,&-4&,&4&,&-4&,&0&,&0&,&  -1 &,& 1 )\\
z^+ =&  z^+(\boldsymbol{\textcolor{blue}{\omega_B}})+3z^+(\boldsymbol{\textcolor{red}{\omega_C}}) & =  (3 & , &0 &,& 4  &,& 0 &,& 4  &,& 0  & ,& 0  &,&0 &,& 0 &,& 1 )\\
\end{array}
\end{equation*}
as it is illustrated in Figure  \ref{fig:zeta-omega2}.
\begin{figure}
  \begin{center}
  \includegraphics[width=9cm]{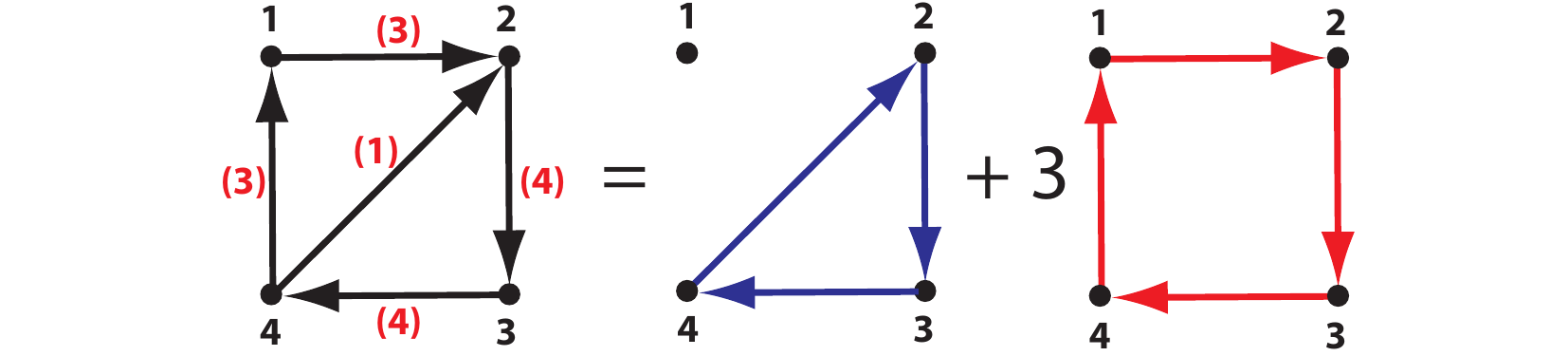}
\end{center}
  \caption{Running example. Computation of the conformal representation of the $z$ of Figure \ref{fig:zeta-omega}\label{fig:zeta-omega2}}
  \end{figure}
\end{example}

\begin{theorem}[The K-ideal is toric] \label{pr:toric}\
  \begin{enumerate}
  \item The K-ideal is the toric ideal of the matrix $A$.
  \item The binomials of the cycles form a Graver basis of the K-ideal.
  \end{enumerate}
\end{theorem}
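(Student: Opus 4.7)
The plan is to treat the two claims in turn, leaning on the conformal decomposition already established in Proposition~\ref{pr:graver}.

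For item~1, I want to establish that the K-ideal equals the toric ideal $I(A)$. The inclusion \emph{K-ideal} $\subseteq I(A)$ is immediate: by formula~\eqref{eq:omegavscycle} each cycle generator $P^\omega-P^{r(\omega)}$ equals $P^{z^+(\omega)}-P^{z^-(\omega)}$, and because $z(\omega)\in\integers(\mathcal D)=\ker A$ (Definition~\ref{def:modelmatrix}, together with the orthogonality of the cycle and cocycle spaces), this binomial lies in the kernel of the monomial map $h$. The reverse inclusion is where the graph-theoretic content does the work. Recall from the toric theory recalled just before the statement that $I(A)$ is spanned as a $\rationals$-vector space by the binomials $P^{z^+}-P^{z^-}$ with $z\in\integers(\mathcal D)$. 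Given such a $z$, part~(\ref{item:graver1}) of Proposition~\ref{pr:graver} furnishes a conformal decomposition $z=\sum_{i=1}^n\alpha(\omega_i)z(\omega_i)$ with $z^{\pm}=\sum_i\alpha(\omega_i)z^{\pm}(\omega_i)$ componentwise, so that
\begin{equation*}
P^{z^+}=\prod_{i=1}^n \bigl(P^{z^+(\omega_i)}\bigr)^{\alpha(\omega_i)}, \qquad P^{z^-}=\prod_{i=1}^n \bigl(P^{z^-(\omega_i)}\bigr)^{\alpha(\omega_i)}.
\end{equation*}
Expanding each power as a product of $\alpha(\omega_i)$ identical factors and applying the telescoping identity $\prod_j x_j - \prod_j y_j = \sum_k \bigl(\prod_{j<k}y_j\bigr)(x_k-y_k)\bigl(\prod_{j>k}x_j\bigr)$ then writes $P^{z^+}-P^{z^-}$ as an explicit polynomial combination of the cycle binomials $P^{z^+(\omega_i)}-P^{z^-(\omega_i)}=P^{\omega_i}-P^{r(\omega_i)}$, exhibiting it as an element of the K-ideal.

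For item~2, the combinatorial work is already packaged in Proposition~\ref{pr:graver}(2), which identifies $\{z(\omega):\omega\in\mathcal C\}$ as the Graver basis of the lattice $\integers(\mathcal D)$. Transporting this statement to the polynomial ring through the correspondence $z\mapsto P^{z^+}-P^{z^-}$ and invoking item~1 to identify the K-ideal with $I(A)$, the set of cycle binomials $\{P^\omega-P^{r(\omega)}\}_{\omega\in\mathcal C}$ is the Graver basis of the K-ideal, as claimed.

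The only step that demands a small amount of care — rather than being a genuine obstacle — is the telescoping reduction in item~1: the conformality conclusion $z^+=\sum_i\alpha(\omega_i)z^+(\omega_i)$ (and its dual for $z^-$) is precisely what guarantees that every intermediate product encountered during the telescoping is an honest monomial in the indeterminates $P_a$, with no negative exponents, so the manipulations are legitimate identities in $\rationals[P_a:a\in\arcs]$ and not merely in a Laurent extension.
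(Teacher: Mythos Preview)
Your proof is correct and follows essentially the same route as the paper's own argument: both directions of item~1 rest on Proposition~\ref{pr:graver}.\ref{item:graver1} in exactly the way you describe, and item~2 is deduced from Proposition~\ref{pr:graver}(2) together with the standard correspondence between Graver bases of the lattice and of the toric ideal. Your explicit telescoping identity and the remark that conformality keeps all intermediate monomials in $\rationals[P_a:a\in\arcs]$ spell out a step the paper leaves implicit, but the underlying argument is the same.
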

\begin{proof}
\begin{enumerate}
\item
For each cycle $\omega$ the cycle vector $z(\omega)$ belongs to $\integers(\mathcal D)$. From Equation \eqref{eq:omegavscycle}, $P^{z^+(\omega)} - P^{z^-(\omega)} = P^\omega - P^{r(\omega)}$, therefore the K-ideal is contained in the toric ideal $I(A)$ because of Proposition \ref{prop:cyclebasis}.

To prove the equality we must show that each binomial in $I(A)$ belongs to the K-ideal. From Proposition \ref{pr:graver}.\ref{item:graver1}, it follows that
  \begin{equation*}
    P^{z^+} - P^{z^-} = \prod_{i=1}^n (P^{z^+(\omega_i)})^{\alpha(\omega_i)} -
    \prod_{i=1}^n (P^{z^-(\omega_i)})^{\alpha(\omega_i)}
  \end{equation*}
belongs to the K-ideal.
\item
The Graver basis of a toric ideal is the set of binomials whose exponents are the positive and negative parts of a Graver basis of $\integers(\mathcal D)$. From Propositions \ref{pr:graver} and the previous item the proof follows.
\end{enumerate}
\end{proof}

\subsection{Non-zero K-ideal}
The knowledge that the K-ideal is toric is relevant, because the homomorphism definition in Equation \eqref{eq:h-homo} provides a parametric representation of the the variety. In particular,  the strictly positive $P_a$, $a \in \arcs$, are given by:
\begin{align}
  P_{v\to w} &= s(v,w) \prod_B t_B^{u_{v\to w}(B)} \notag \\ &= s(v,w)
   \prod_{B \colon v\in B, w \notin B } t_B \ \prod_{B \colon w\in B, v \notin B} t_B^{-1},\quad s(v,w)>0, \quad t_B >0 .\label{eq:p-v-to-w}
\end{align}

We observe that the first set of parameters, $s(v,w)$, is a function of the edge, while the second set of parameters, $t_B$, represents the deviation from symmetry. In fact, as
\begin{equation*}
  P_{w\to v} = s(w,v) \prod_B t_B^{u_{w\to v}(B)} = s(v,w) \left(\prod_B t_B^{u_{v\to w}(B)}\right)^{-1},
\end{equation*}
we have $P_{v\to w}  = \left(\prod_B t_B^{u_{v\to w}(B)}\right)^2 P_{w\to v}$, so that $P_{v\to w} = P_{w\to v}$ if, and only if,
\begin{equation*}
  \left(\prod_B t_B^{u_{v\to w}(B)}\right)^2 = 1, \quad v \in V.
\end{equation*}

As the rows of $E$ are linearly independent, the $s(v,w)$'s parameters carry $\#\edges$ degrees of freedom to represent a generic symmetric matrix.  The second set of parameters is not identifiable because the rows of the $U$ matrix are not linearly independent. The parameterization \eqref{eq:p-v-to-w} can be used to derive an explicit form of the invariant probability. All properties of the parameterization are collected in the following Proposition.
\begin{theorem}\label{th:parameters}
Consider the strictly non-zero points on the  K-variety.
\begin{enumerate}
\item
The symmetric parameters $s(e)$, $e\in\edges$, are uniquely determined in Equation \eqref{eq:p-v-to-w}. The parameters $t_{B}$, $\emptyset \neq B \subsetneq V$ are confounded by $\ker U = \set{U^{t}t = 0}$
\item\label{item:param}
An identifiable parameterization is obtained by taking a subset of parameters corresponding to linearly independent rows, denoted by $t_{B}$, $B \subset \mathcal S$:
\begin{equation}\label{eq:p-v-to-w-li}
  P_{v\to w} =  s(v,w) \prod_{B \subset \mathcal S \colon v\in B, w \notin B } t_B \
  \prod_{B \subset \mathcal S \colon w\in B, v \notin B} t_B^{-1}
\end{equation}
\item \label{item:param1}
The detailed balance equations, $ \kappa(v) P_{v\to w}=\kappa(w) P_{w\to v} $, are verified by
\begin{equation}\label{eq:k}
\kappa(v) \propto \prod_{B \colon v\in B} t_B^{-2}
\end{equation}
\end{enumerate}
\end{theorem}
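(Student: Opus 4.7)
The whole statement rests on one elementary identity about cocycle vectors: for any $\emptyset\neq B\subsetneq V$ and any arc $v\to w$, a case analysis on whether $v,w\in B$ gives
\begin{equation*}
u_{v\to w}(B)=\mathbf{1}_{B}(v)-\mathbf{1}_{B}(w),
\end{equation*}
which in particular recovers the antisymmetry $u_{w\to v}(B)=-u_{v\to w}(B)$ already used earlier.

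For item 1, I first recover $s(v,w)$ by pairing the two arcs of an edge $\overline{vw}$: the $t_{B}$ factors in $P_{v\to w}$ and $P_{w\to v}$ are reciprocals of one another by antisymmetry, so $P_{v\to w}P_{w\to v}=s(v,w)^{2}$, which forces $s(v,w)=\sqrt{P_{v\to w}P_{w\to v}}$. Once the $s(e)$'s are fixed, taking $\log$ in \eqref{eq:p-v-to-w} turns the parameterization into the linear map $x\mapsto U^{\top}x$ with $x_{B}=\log t_{B}$; two parameter vectors produce the same transitions if and only if their difference lies in $\ker U^{\top}$, which is precisely the stated confounding.

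For item 2, let $\mathcal S$ index any maximal linearly independent subset of rows of $U$. The dropped rows are rational linear combinations of the kept ones, so the log-linear relation $y=U^{\top}x$ can be restricted to $\mathcal S$ without loss of information on $P$; exponentiating gives \eqref{eq:p-v-to-w-li}, and the restricted matrix $U_{\mathcal S}^{\top}$ is now injective, which is identifiability.

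For item 3, direct substitution combined with the identity above yields, exponent by exponent in $B$,
\begin{equation*}
-2\,\mathbf{1}_{B}(v)+u_{v\to w}(B)=-\mathbf{1}_{B}(v)-\mathbf{1}_{B}(w),
\end{equation*}
so $\kappa(v)P_{v\to w}=s(v,w)\prod_{B}t_{B}^{-\mathbf{1}_{B}(v)-\mathbf{1}_{B}(w)}$; the right-hand side is symmetric in $v$ and $w$, and the analogous computation of $\kappa(w)P_{w\to v}$ returns the same expression, so detailed balance holds. The only mild obstacle is keeping track of exponents and sign conventions; once the identity $u_{v\to w}(B)=\mathbf{1}_{B}(v)-\mathbf{1}_{B}(w)$ is noted, each of the three parts reduces to a short computation.
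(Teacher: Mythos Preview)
Your proof is correct and follows essentially the same route as the paper: log-linearizing \eqref{eq:p-v-to-w} to separate the $s$- and $t$-contributions for item~1, restricting to an independent set of cocycle rows for item~2, and verifying detailed balance by direct substitution for item~3. The one cosmetic difference is that you organize everything through the identity $u_{v\to w}(B)=\mathbf{1}_B(v)-\mathbf{1}_B(w)$, which lets you read off $s(v,w)=\sqrt{P_{v\to w}P_{w\to v}}$ and the symmetric exponent $-\mathbf{1}_B(v)-\mathbf{1}_B(w)$ in item~3 immediately, whereas the paper argues item~1 via the orthogonality of the rows of $E$ and $U$ and handles item~3 by multiplying both sides of the balance equation by the common factor $\prod_{B\ni v,w}t_B^{2}$; these are the same computations in different packaging.
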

\begin{proof}
\begin{enumerate}
\item We have $\log P = E^{t}s + U^{t}t$ for $P=(P_{v\to w}\colon (v\to w) \in \arcs)$, $s = (s(e) \colon e\in\edges)$, $t = (t_B \colon \emptyset \ne S \subsetneq V)$. If $E^{t}s_1 + U^{t}t_1 = E^{t}s_2 + U^{t}t_2$, then $E^{t}(s_1-s_2) = 0$ because the rows of $E$ are orthogonal to the rows of $U$. Hence, $s_1=s_2$ because $E$ has full rank. Finally, $U^{t}t_1 = U^{t}t_2$.
\item The sub-matrix of $A$ formed by $E$ and by the rows of $U$ in $\mathcal S$ has full rank.
\item
Using Equations \eqref{eq:p-v-to-w}, we have:
\begin{equation*}
\kappa(v) \ s(v,w) \prod_{S \ \colon v\in S, w \notin S } t_B   \prod_{S  \colon w\in S, v \notin S} t_B^{-1} =
\kappa(w) \ s(v,w) \prod_{S \ \colon w\in S, v \notin S } t_B   \prod_{S  \colon v\in S, w \notin S} t_B^{-1}
\end{equation*}
which is equivalent to
\begin{equation*}
\kappa(v)    \prod_{S \ \colon v\in S, w \notin S } t_B^{2}   = \kappa(w)  \prod_{S \ \colon w\in S, v
\notin S } t_B^{2}.
\end{equation*}
By multiplying both terms in the equality by $\prod_{S \ \colon v\in S, w \in S } t_B^{2}$, we obtain
\begin{equation*}
\kappa(v)    \prod_{S \ \colon v\in S} t_B^{2}   = \kappa(w)  \prod_{S \ \colon w\in S} t_B^{2},
\end{equation*}
so that $\kappa(v)=\prod_{S  \colon v\in S} t_B^{-2}$ depends only on $v$ and satisfy the detailed balance condition.
\end{enumerate}
\end{proof}

We are now in the position of stating an algebraic version of Kolmogorov's theorem.

\begin{definition}
The \emph{detailed balance ideal} is the ideal of the ring
\begin{equation*}
  \rationals[\kappa(v):v\in V,P_{v\to w}, (v\to w)\in \arcs]
\end{equation*}
generated by the polynomials
\begin{align*}
  &\prod_{v\in V} \kappa(v) - 1,\\
  &\kappa(v)P_{v\to w} - \kappa(w)P_{v\to w}, \quad (v\to w) \in \arcs .
\end{align*}
\end{definition}
The first polynomial in the list of generators, $\prod_{v\in V} \kappa(v) - 1$, assures that the $\kappa$'s are not zero.
\begin{theorem}\
\begin{enumerate}
\item
A point $P = \left[P_{v\to w}\right]_{v\to w \in \arcs}$ with non-zero components belongs to the variety of the K-ideal if, and only if, there exists $\kappa=\left(\kappa(v) \colon v\in V \right)$ such that $\left(\kappa,P \right)$ belongs to the variety of the detailed balance ideal.
\item
The detailed balance ideal is a toric ideal.
\item
The K-ideal is the $\kappa$-elimination ideal of the detailed balance ideal.
\end{enumerate}
\end{theorem}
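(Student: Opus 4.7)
My plan is to tackle the three parts in order, leveraging the reversible Markov chain analysis of Section \ref{sec:k-theorem} for part (1), constructing an explicit monomial parametrization (built from the one in Theorem \ref{th:parameters}) for part (2), and combining these via elementary elimination theory for part (3).

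For part (1), the forward direction is essentially Kolmogorov's theorem. Given $P$ with all entries nonzero lying on the K-variety, the construction in Section \ref{sec:k-theorem} produces positive constants $\kappa(v)$, unique up to a common positive scalar, with $\kappa(v)P_{v\to w}=\kappa(w)P_{w\to v}$. Since all $\kappa(v)>0$, the product $\prod_v \kappa(v)$ is positive; rescaling by the unique positive $N$-th root of $1/\prod_v \kappa(v)$ (with $N=\#V$) yields a representative of the fibre that lies on the detailed balance variety. For the converse, I would suppose $(\kappa,P)$ satisfies every binomial of the detailed balance ideal and telescope the identities $\kappa(v_i)P_{v_i\to v_{i+1}}=\kappa(v_{i+1})P_{v_{i+1}\to v_i}$ around an arbitrary cycle $\omega=v_0v_1\cdots v_{n-1}v_0$. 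The $\kappa$-factors cancel pairwise (using $\prod_v \kappa(v)=1$ to ensure none vanish), leaving $P^\omega=P^{r(\omega)}$, so $P$ lies in the K-variety.

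For part (2), the plan is to exhibit a monomial homomorphism whose kernel is precisely the detailed balance ideal. Starting from Equation \eqref{eq:p-v-to-w} and Theorem \ref{th:parameters}.\ref{item:param1}, the natural parametrization is $\kappa(v)\propto \prod_{B\ni v}t_B^{-2}$ together with $P_{v\to w}=s(v,w)\prod_B t_B^{u_{v\to w}(B)}$. The obstruction is that the normalization $\prod_v\kappa(v)=1$ forces the proportionality constant to be a rational power $(\prod_B t_B^{2|B|})^{1/N}$. I would clear denominators by passing to rescaled variables $\tilde t_B=t_B^{1/N}$, defining
\begin{equation*}
  h_{db}\colon \kappa(v)\longmapsto \prod_B \tilde t_B^{2(|B|-N\chi_B(v))},\qquad P_{v\to w}\longmapsto s_{\{v,w\}}\prod_B \tilde t_B^{N u_{v\to w}(B)}.
\end{equation*}
A direct calculation shows that $\prod_v \kappa(v)\mapsto 1$ and that each detailed balance binomial lies in $\ker h_{db}$, so the detailed balance ideal is contained in this toric kernel. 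For the reverse inclusion, I would imitate the Graver-basis argument of Proposition \ref{pr:graver} and Theorem \ref{pr:toric}: any binomial in $\ker h_{db}$ arises from an integer relation among the columns of the extended model matrix, and such relations decompose conformally into cycle-type and normalization-type pieces that are generated by our two families. This step, verifying the kernel is exactly the detailed balance ideal, is the main technical obstacle and the place where one must carefully track how the extra $\kappa$-columns interact with the cocycle rows.

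For part (3), once (1) and (2) are in hand the elimination statement follows by standard arguments. For $I_K\subseteq I_{db}\cap \mathbb{Q}[P]$, given a generator $P^\omega-P^{r(\omega)}$ of the K-ideal, the telescoping of part (1) shows $\kappa(v_0)(P^\omega-P^{r(\omega)})\in I_{db}$; but modulo $I_{db}$ we have $\kappa(v_0)\cdot\prod_{v\ne v_0}\kappa(v)=1$, so multiplying by $\prod_{v\ne v_0}\kappa(v)$ (a genuine polynomial) yields $P^\omega-P^{r(\omega)}\in I_{db}$. For the reverse inclusion, any $f(P)\in I_{db}\cap \mathbb{Q}[P]$ vanishes on the projection to $P$-coordinates of the detailed balance variety, which by part (1) contains every nonzero point of the K-variety; since the K-ideal is toric hence prime by Theorem \ref{pr:toric}, the K-variety is irreducible and the nonzero locus is dense in it, so $f$ vanishes on the whole K-variety and $f\in I_K$.
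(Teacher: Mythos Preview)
Your argument for part (1) is correct and matches the paper's: one direction is the easy half of Kolmogorov's theorem (telescoping detailed balance around a cycle), the other is the construction of $\kappa$ from the toric parameters in Theorem~\ref{th:parameters}.\ref{item:param1}.

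For part (2) you take a genuinely different, and in one respect more careful, route. The paper simply declares that the detailed balance ideal is the kernel of the map $P_{v\to w}\mapsto s(v,w)\prod_B t_B^{u_{v\to w}(B)}$, $\kappa(v)\mapsto\prod_{B\ni v}t_B^{-2}$, without addressing the point you raise: under that map $\prod_v\kappa(v)\mapsto\prod_B t_B^{-2|B|}\ne 1$, so the normalization generator is not visibly in the kernel. Your rescaled parametrization $\kappa(v)\mapsto\prod_B\tilde t_B^{\,2(|B|-N\chi_B(v))}$ does send $\prod_v\kappa(v)-1$ to zero while keeping the detailed-balance binomials in the kernel, so it is an honest fix. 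You are right that the reverse inclusion (showing the kernel is no larger than the detailed balance ideal) is the real work; the paper does not spell this out either, so your treatment is at least as complete as the paper's on this point.

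For part (3) the paper and you diverge in method. The paper invokes the standard toric fact that eliminating variables from a toric ideal amounts to deleting the corresponding columns of the model matrix, so the $\kappa$-elimination of the detailed balance ideal is the toric ideal of the $P$-columns alone, which is the K-ideal by Theorem~\ref{pr:toric}. Your route is geometric: you show each cycle binomial lies in $I_{db}$ directly (the telescoping plus multiplication by $\prod_{v\ne v_0}\kappa(v)$ is correct), and for the reverse inclusion you use irreducibility of the K-variety and density of its nonzero locus. This is valid provided you pass to an algebraically closed field for the Nullstellensatz step; since toric ideals remain prime under base change this is harmless. The paper's argument is shorter but leans on part (2); yours is more self-contained.
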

\begin{proof}
\begin{enumerate}
\item One direction is the Kolmogorov's theorem. The other direction is a rephrasing of Item \ref{item:param1} of Theorem \ref{th:parameters}.
\item
This ideal is the kernel of the homomorphism defined by \eqref{eq:h-homo}, i.e. $P_{v\to w} \longmapsto s(v,w)
\prod_{B} t_B^{u_{v\to w}(B)}$ together with $\kappa(v) \mapsto \prod_{B  \colon v\in B} t_B^{-2}$.
\item The elimination ideal is generated by dropping the parametric equations of the indeterminates to be eliminated.
\end{enumerate}
\end{proof}

\subsection{Parameterization of reversible transitions}
The parameterization in Theorem \ref{th:parameters}.\ref{item:param} is to be compared to that in Proposition \ref{prop:squarerootpi}. Both split the parameter space into a representation of the invariant probability and a generic symmetric function. The latter is a special case of the former. In fact, it is obtained by the use of the basis of the cocycle space where each $B$ is the set containing one vertex.

The parameterization of reversible Markov matrices is obtained by adding to the representation in Equation
\eqref{eq:p-v-to-w-li} the relevant inequalities. Let us check first the degrees of freedom. As a reversible Markov
matrix supported by a connected graph $\mathcal G$ is parameterized by the joint 2-distributions of the stationary Markov
chain, the number of degrees of freedom is $\# V + \# \edges -1$, i.e. the cyclotomic number of the graph $\mathcal G$.
In the parameterization of Proposition \ref{prop:squarerootpi} the probability $\pi$ carries $\# V -1$ degrees of
freedom, therefore $s$ must carry $\# \edges$ degrees of freedom.
\begin{theorem}\label{th:par-k}
Let $P$ be a matrix with supporting graph $\mathcal G = (V,\edges)$. Let $\mathcal S$ be a family of subsets of $V$ such that the cocycle vectors $u_B$, $B\in \mathcal S$, span the coclycle space.
  \begin{enumerate}
\item $P$ is a reversible Markov matrix if, and only if, there exists a non-negative symmetric function $s\colon V\times V \rightarrow \posreals$ which is supported on the edges $\edges$ and there exist positive parameters $t_B > 0$, $B\in\mathcal S$, such that
\begin{equation}\label{eq:MPR1}
  P_{v\to w} =  s(v,w) \prod_{B \in \mathcal S \colon v\in B, w \notin B } t_B \
  \prod_{B \in \mathcal S \colon w\in B, v \notin B} t_B^{-1}, \quad (v\to w) \in \arcs.
\end{equation}
and the invariant probability is proportional to $\kappa = \prod_{B \ni v} t_B^{-2}$.
\item For $v \in V$,
\begin{equation*}
  \prod_{B \ni v} t_B^{-1} \ge \sum_{w\in N(v)} s(v,w) \prod_{B \ni w} t_B^{-1}.
\end{equation*}
\item The parameters  $s(e)$, $e\in\edges$ and $\kappa(v)$, $v \in V$, are identifiable, while the parameters $t_B$, $B\in\mathcal S$, are identifiable if $u_B$, $B\in \mathcal S$, is a basis of the cocycle space.
\end{enumerate}
\end{theorem}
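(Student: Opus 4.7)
The plan is to assemble ingredients already established: Kolmogorov's theorem, Theorem \ref{pr:toric} (the K-ideal is toric), and Theorem \ref{th:parameters}, which furnishes the parametric representation of the strictly positive points of the K-variety. What remains is to upgrade that representation to accommodate a spanning (not necessarily independent) family $\mathcal S$, to translate the Markov constraint into the inequality of item (2), and to verify the identifiability claims of item (3).

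For item (1), the easy direction $(\Leftarrow)$ is a direct detailed-balance computation: with $P_{v\to w}$ given by \eqref{eq:MPR1} and $\kappa(v) = \prod_{B \ni v} t_B^{-2}$, the same rearrangement as in Theorem \ref{th:parameters}.\ref{item:param1} gives $\kappa(v) P_{v\to w} = \kappa(w) P_{w\to v}$, so $P$ is reversible with invariant probability $\pi \propto \kappa$, provided the row sums are at most $1$ (which is item (2)). For the converse $(\Rightarrow)$, a reversible $P$ with support $\mathcal G$ lies on the K-variety by Kolmogorov's theorem and hence, by Theorem \ref{pr:toric}, on the toric variety of the model matrix; its strictly positive points are parameterized by \eqref{eq:p-v-to-w} with $t_B$ ranging over \emph{all} non-trivial subsets. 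To shrink the index set to $\mathcal S$, I would use that $\{u_B : B \in \mathcal S\}$ spans the cocycle space, so every $u_{B'}$ is a linear combination of the $u_B$ with $B \in \mathcal S$, and the corresponding power of $t_{B'}$ can be absorbed into redefined $\mathcal S$-parameters, yielding the form \eqref{eq:MPR1}.

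For item (2), I would substitute \eqref{eq:MPR1} into $\sum_{w \in N(v)} P_{v\to w} \le 1$. Splitting $\prod_{B \in \mathcal S:\, v \in B,\, w \notin B} t_B$ as $\prod_{B \in \mathcal S:\, B \ni v} t_B \cdot \prod_{B \in \mathcal S:\, B \supseteq \{v,w\}} t_B^{-1}$ and the analogous identity for the $t_B^{-1}$-product, the double-containment factors cancel, giving
\begin{equation*}
  P_{v\to w} = s(v,w) \prod_{B \in \mathcal S:\, B \ni v} t_B \prod_{B \in \mathcal S:\, B \ni w} t_B^{-1}.
\end{equation*}
Factoring $\prod_{B \ni v} t_B$ out of the sum and rearranging yields the stated inequality.

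For item (3), $s$ and $\kappa$ are intrinsic to $P$: since $u_{w \to v}(B) = -u_{v \to w}(B)$, \eqref{eq:MPR1} gives $s(v,w)^2 = P_{v\to w} P_{w\to v}$, while $\kappa$ is proportional to the unique invariant probability of $P$. For the $t_B$, taking logarithms in \eqref{eq:MPR1} gives $\log P = E^\top \log s + U_{\mathcal S}^\top \log t$, where $U_{\mathcal S}$ is the submatrix of $U$ whose rows are indexed by $\mathcal S$. By the orthogonality of the row spaces of $E$ and $U$ (already used in Theorem \ref{th:parameters}.1), $\log s$ is uniquely recovered, and $\log t$ is then determined by $U_{\mathcal S}^\top \log t$; the latter is injective iff $U_{\mathcal S}^\top$ has trivial kernel, i.e.\ iff $\{u_B : B \in \mathcal S\}$ is linearly independent and therefore a basis of the cocycle space. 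The main obstacle I anticipate is the algebraic bookkeeping in item (2), keeping track of which subsets $B \in \mathcal S$ contain $v$, which contain $w$, and which contain both, so that the $t_B$-exponents collapse into the clean two-term form needed for the inequality. A secondary issue is the spanning-versus-basis subtlety in (1) and (3): the homomorphism \eqref{eq:h-homo} factors through $U^\top \log t$, so spanning suffices for representability of $P$, but injectivity of $\log t \mapsto U_{\mathcal S}^\top \log t$ requires the stronger basis property.
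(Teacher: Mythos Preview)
Your proposal is correct and follows essentially the same approach as the paper: item (1) is obtained from Theorem \ref{th:parameters} via the identity $\kappa(w)^{1/2}\kappa(v)^{-1/2} = \prod_{B\in\mathcal S:\, v\in B,\, w\notin B} t_B \prod_{B\in\mathcal S:\, w\in B,\, v\notin B} t_B^{-1}$, item (2) directly from the row-sum constraint $\sum_{w\in N(v)} P_{v\to w}\le 1$, and item (3) by a ratio-of-two-parameterizations argument equivalent to your direct formula $s(v,w)^2 = P_{v\to w}P_{w\to v}$ together with the logarithmic form $\log P = E^\top \log s + U_{\mathcal S}^\top \log t$. Your handling of the spanning-versus-basis distinction in item (1) and the explicit cancellation of the double-containment factors in item (2) are in fact more detailed than the paper's rather terse proof.
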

\begin{proof}
  \begin{enumerate}
  \item Follows from \eqref{eq:p-v-to-w-li} and \eqref{eq:k} and
  \begin{equation*}
 \kappa(w)^{1/2}\kappa(v)^{-1/2}=\frac{\prod_{S  \colon v\in S} t_B}{\prod_{S  \colon w\in S} t_B} = \prod_{S \subset \mathcal S \colon v\in S, w \notin S } t_B \
  \prod_{S \subset \mathcal S \colon w\in S, v \notin S} t_B^{-1}.
  \end{equation*}
\item It follows from Equation \eqref{eq:MPR1} as $P$ is a transition probability.
 \item Assume there exists two set of parameters $t^{(i)}$, $s^{(i)}$, $i=1,2$ giving the same $P$, and define $t=t^{(1)}/t^{(2)}$, $s=s^{(1)}/s^{(2)}$, $\kappa(v) = \kappa^{(1)}(v)/\kappa^{(2)}(v)$. It follows
   \begin{align*}
     1 &= s(v,w) \prod_{B \in \mathcal S \colon v\in B, w \notin B } t_B \
  \prod_{B \in \mathcal S \colon w\in B, v \notin B} t_B^{-1}, \quad (v\to w) \in \arcs,\\
 1 &= s(w,v) \prod_{B \in \mathcal S \colon w \in B, v \notin B } t_B \
  \prod_{B \in \mathcal S \colon v\in B, w \notin B} t_B^{-1}, \quad (w\to v) \in \arcs,
   \end{align*}
hence $s(v,w)s(w,v)= s(v,w)^2 = 1$. In turn we get
   \begin{equation*}
     1 = \left(\prod_{B \in \mathcal S \colon v\in B, w \notin B } t_B
  \prod_{B \in \mathcal S \colon w\in B, v \notin B} t_B^{-1}\right)^2 = \frac{\kappa(v)}{\kappa(w)}.
   \end{equation*}
The identifiability of the $t_B$'s follows from
\begin{equation*}
  \log P_{v\to w} = \log s(v,w) + \sum_{B\in \mathcal S} (\log t_B) u_B
\end{equation*}

 \end{enumerate}
\end{proof}


\begin{example}[Running example continue]
An over-parameterization of two transition probabilities of the $K$-variety is:
\begin{align*}
 P_{3\to 4} &=  s(3,4)\ t_{\{3\}} \ t_{\{1,3\}} \ t_{\{2,3\}} \ t_{\{1,2,3\}} \ t_{\{4\}}^{-1} \ t_{\{1,4\}}^{-1} \
 t_{\{2,4\}}^{-1} \ t_{\{1,2,4\}}^{-1}\\
 P_{4\to 3} &=  s(3,4)\  t_{\{4\}}\ t_{\{1,4\}}\ t_{\{2,4\}}\ t_{\{1,2,4\}}\ t_{\{3\}}^{-1} \ t_{\{1,3\}}^{-1} \
 t_{\{2,3\}}^{-1}  \ t_{\{1,2,3\}}^{-1}
\end{align*}
By choosing the cocycle basis $\mathcal S=\left\{ \{1\}, \{3\},\{1,2\}\right\}$, we have:
\begin{equation*}
  \left\{
\begin{aligned}
\kappa(1)&= t_{\{1\}}^{-2} t_{\{1,2\}}^{-2}  \\
\kappa(2)&= t_{\{1,2\}}^{-2} \\
\kappa(3)&= t_{\{3\}}^{-2} \\
\kappa(4)&= 1
\end{aligned}
\right. \qquad \Longleftrightarrow \qquad
\left\{
\begin{aligned}
t_{\{1\}}&= \kappa(1)^{-1/2} \ \kappa(2)^{1/2}\\
t_{\{3\}}&= \kappa(3)^{-1/2}\\
t_{\{1,2\}} &= \kappa(2)^{-1/2}
\end{aligned}\right.
\end{equation*}

The transition matrix parameterized by $s(e)$, $e \in \edges$ and $t_B$, $S \in \mathcal S$ is
\begin{equation*}
  \bordermatrix[{[}{]}]
  {& 1 & 2 & 3 & 4\cr
  1& \star & s(1,2)\ t_{\{1\}}^{-1} & 0 &s(1,4)\ t_{\{1\}}^{-1} \ t_{\{1,2\}}^{-1}\cr
  2&s(1,2)\ t_{\{1\}} & \star & s(2,3)\ t_{\{1,2\}}^{-1}\ t_{\{3\}} & s(2,4)\ t_{\{1,2\}}^{-1}\cr
  3& 0 &s(2,3)\  t_{\{3\}}^{-1}\ t_{\{1,2\}}  & \star & s(3,4) \ t_{\{3\}}^{-1} \cr
  4& s(1,4) \ t_{\{1\}}\ t_{\{1,2\}} & s(2,4) \  t_{\{1,2\}} &  s(3,4) \ t_{\{3\}} & \star \cr }.
\end{equation*}
where the diagonal terms are uniquely defined if the following set of inequalities is true
\begin{equation*}\left\{
  \begin{aligned}
     1 &\ge s(1,2)  t_{\{1,2\}}+ s(1,4) \cr
     1 &\ge s(1,2)t_{\{1\}} t_{\set{1,2}} + s(2,3) t_{\{3\}} + s(2,4) \cr
     1 &\ge s(2,3) t_{\{1,2\}} + s(3,4) \cr
     1 &\ge s(1,4) t_{\{1\}} t_{\{1,2\}} + s(2,4) t_{\{1,2\}} +  s(3,4) t_{\{3\}} \cr
\end{aligned}\right.
\end{equation*}
\end{example}

\section{Discussion}
\label{sec:discussion}
The algebraic analysis of statistical models of the type $p \propto t^A$ where $A$ is an integer matrix has been first introduced in \cite{geiger|meek|sturmfels:2006}, where an implicit binomial form of the model is derived from the monomial form.

Our analysis differs from that in two respect. First, we move backwards from the binomial form represented by the Kolmogorov's conditions to the monomial form. Second, we parameterize the transition probabilities, so that the normalization requires more than one constant.

Our parameterization of a reversible Markov matrix is based on a generic weight function $s(e)$ on the edges $e\in\edges$ of the structure graph $\mathcal G$ and a monomial form of the invariant probability. When the basis of the cocycle space is given by the vertexes of the graph, the parameterization is identical to that the classical form $P_{v\to w} = \pi(v)^{-1/2} \pi(w)^{1/2} s(v,w)$. The monomial form of the unnormalized invariant probability $\kappa(v)=\prod_{B \in \mathcal B\colon v\in B} t_B^{-2}$ suggests the use of a family of sets $\mathcal B$ smaller than a cocycle basis $\mathcal S$ in order to get a parsimonious statistical model. For example, if the graph $\mathcal G$ is a square $N\times N$ grid, a coarse-grained model could use $n\times n$ sub-grids, $1 < n < N$.

One distinct advantage of the implicit binomial form is its ability to fully describe the closure of its strictly positive part, i.e. the extended exponential family. The computation of an Hilbert basis of the non-negative integer kernel of the cocycle matrix $U$ leads to a parameterization of the extended exponential family as in \cite{malago|pistone:1012.0637}, see also \cite{rauh|kahle|ay:09}. However, in this case the border of the model appears to consist simply on the deletion of edges in the support graph.

It follows from general properties of toric ideals that a Graver basis is a universal Gr\"obner basis and that a universal Gr\"obner basis is a Markov basis, \cite{sturmfels:1996}. The Markov basis property is related with the connectedness of random walks on the fibers of $A$, see \cite{diaconis|sturmfels:98} and subsequent literature on MCMC simulation. In this case it would be a simulation of a random reversible Markov matrix.

Finally, the knowledge of a Graver basis for the K-ideal provides efficient algorithms for discrete optimization, see \cite{deloera|hemmecke|onn|weismantel:2008} and \cite{onn:20xx}.

\subsection*{Acknowledgments}
During this research we have discussed various items with S. Sullivan, B. Sturmfels, G. Casnati and we want to thank them for providing us with supporting advice. Preliminary version of this paper have been presented at the workshop WOGAS2, Warwick University 2010 and the conference CREST-SBM 2, Osaka 2010. We wish to thank G. Letac for bringing reference \cite{suomela:1979} to our attention and F. Rigat for the reference \cite{peskun:1973}. Also they thank S. Onn for pointing out the relevance of his own work on Graver bases. After the publication of the first draft on the ArXiv, Y. A. Mitrophanov provided us with the reference to his own work on the subject, we where not aware of and that with gratefully acknowledge.


\end{document}